\documentclass[a4paper]{amsart}
\usepackage{latexsym}\usepackage{ifthen}\usepackage[leqno]{amsmath}
\usepackage{enumerate}\usepackage{calc}\usepackage{hyphenat}
\usepackage{amstext,amsbsy,amsopn,amsthm,amsgen}
\usepackage{amsfonts,amscd,amsxtra,upref}
\usepackage{mathrsfs}\usepackage{euscript}\usepackage{amssymb}
\usepackage{hyperref}
\usepackage{color}
\swapnumbers

\theoremstyle{plain}
\makeatletter\@namedef{subjclassname@2020}{\textup{2020} Mathematics Subject Classification}
\makeatother
\newtheorem{Thm}{Theorem}[section]
\newtheorem{Lem}[Thm]{Lemma}
\newtheorem{Cor}[Thm]{Corollary}
\newtheorem{Pro}[Thm]{Proposition}
\theoremstyle{definition}
\newtheorem{Def}[Thm]{Definition}
\newtheorem{Exm}[Thm]{Example}
\theoremstyle{remark}
\newtheorem{Rem}[Thm]{Remark}
\numberwithin{equation}{section}

%%%%%%%%%%%%%%%%%%%%%%%%%%%%

\newcommand{\ITE}[3]{\ifthenelse{#1}{#2}{#3}}\newcommand{\ITEE}[4][]{\ITE{\equal{#2}{#3}}{#4}{#1}}
 \ITE{\isundefined{\texorpdfstring}}{\newcommand{\texorpdfstring}[2]{#1}}{}

\newcommand{\myData}[1][]{
 \author[P.\ Niemiec]{Piotr Niemiec}
 \address{\ITEE{#1}{*}{P.\ Niemiec{}\\}%%
  Wydzia\l{} Matematyki i~Informatyki\\Uniwersytet Jagiello\'{n}ski\\%%
  ul.\ \L{}o\-ja\-sie\-wi\-cza 6\\30-348 Krak\'{o}w\\Poland}
 \email{piotr.niemiec@uj.edu.pl}
 }

\newenvironment{cor}[2][]{\ITEE[{\begin{Cor}[#1]}]{#1}{}{\begin{Cor}}\label{cor:#2}}{\end{Cor}}
\newenvironment{dfn}[2][]{\ITEE[{\begin{Def}[#1]}]{#1}{}{\begin{Def}}\label{def:#2}}{\end{Def}}
\newenvironment{exm}[2][]{\ITEE[{\begin{Exm}[#1]}]{#1}{}{\begin{Exm}}\label{exm:#2}}{\end{Exm}}
\newenvironment{lem}[2][]{\ITEE[{\begin{Lem}[#1]}]{#1}{}{\begin{Lem}}\label{lem:#2}}{\end{Lem}}
\newenvironment{pro}[2][]{\ITEE[{\begin{Pro}[#1]}]{#1}{}{\begin{Pro}}\label{pro:#2}}{\end{Pro}}
\newenvironment{rem}[2][]{\ITEE[{\begin{Rem}[#1]}]{#1}{}{\begin{Rem}}\label{rem:#2}}{\end{Rem}}
\newenvironment{thm}[2][]{\ITEE[{\begin{Thm}[#1]}]{#1}{}{\begin{Thm}}\label{thm:#2}}{\end{Thm}}

\newcommand{\COR}[2][!]{\ITEE{#1}{!}{Corollary~}\ITEE{#1}{s}{Corollaries~}\textup{\ref{cor:#2}}}
\newcommand{\LEM}[2][!]{\ITEE{#1}{!}{Lemma~}\ITEE{#1}{s}{Lemmas~}\textup{\ref{lem:#2}}}
\newcommand{\THM}[2][!]{\ITEE{#1}{!}{Theorem~}\ITEE{#1}{s}{Theorems~}\textup{\ref{thm:#2}}}

\newcommand{\FFF}{\mathbb{F}}
\newcommand{\KKK}{\mathbb{K}}
\newcommand{\NNN}{\mathbb{N}}
\newcommand{\RRR}{\mathbb{R}}
\newcommand{\ZZZ}{\mathbb{Z}}
\newcommand{\LlL}{\EuScript{L}}

\newcommand{\dd}{\colon}
\newcommand{\df}{\stackrel{\textup{def}}{=}}
\newcommand{\epsi}{\varepsilon}
\newcommand{\scalar}[2]{\left\langle#1,#2\right\rangle}
\newcommand{\scalarr}{\langle\cdot,\mathrm{-}\rangle}
\newcommand{\varempty}{\varnothing}

\newcommand{\diam}{\operatorname{diam}}
\newcommand{\UP}[1]{\textmd{\textup{#1}}}

%%%%%%%%%%%%%%%%%%%%%%%%%%%%

\begin{document}

\title{Two-point dilation-homogeneous metric spaces}
\myData\thanks{Research supported by the National Center of Science, Poland
 under the Weave-UNISONO call in the Weave programme [grant no
 2021/03/Y/ST1/00072].}
\begin{abstract}
The main aim of the paper is to give a full classification (up to isometry) of
all metric spaces \(X\) with the following two properties: \(X\) contains
a compact set with non-empty interior; and for any three distinct points \(a,
b\) and \(c\) of \(X\) there exists a (bijective) dilation on \(X\) that fixes
\(a\) and sends \(b\) to \(c\). As a consequence, we obtain a new
characterisation of the Euclidean spaces \(\RRR^n\) (with finite \(n > 0\)):
these are (up to isometry) precisely all metric spaces that have the above two
properties, and (in addition) contain three distinct points \(x, y, z\) that
are metrically collinear (that is, for which \(d(x,z) = d(x,y)+d(y,z)\)).
\end{abstract}
\subjclass[2020]{Primary 22F30; Secondary 51F99, 57S99}
\keywords{two-transitive group action; dilation; Euclidean space; two-point
 homogeneous metric space.}
\maketitle

\section{Introduction}

In the 50's of the 20th century Wang \cite{wan} and Tits \cite{tit} gave a full
characterisation (up to isometry) of all connected locally compact metric spaces
that are two-point homogeneous (that is, such spaces that for any two equidistal
pairs of points there is a bijective isometry that sends one of these pairs onto
the other). In the same paper the second of these authors also classified all
(the so-called) two-transitive Lie groups. A modern approach to this last issue
was presented by Kramer in \cite{kra}. All these results deal with locally
compact spaces that are not totally disconnected. Although for connected
(separable locally compact metrisable) spaces a nice classification is possible,
a similar result for totally disconnected spaces seems to be unreachable. So,
a natural question arises whether there is a similar class (to two-point
homogeneous metric spaces) which makes it possible to give a full classification
(up to isometry) of \emph{all} locally compact metric spaces from that class,
and simultaneously that contains all Euclidean spaces. The main aim of this
paper is to introduce such a class. It is formed by all locally compact spaces
on which the transformation group of all dilations acts two-transitively. It
follows from our classification that all Euclidean spaces can nicely be
characterised (see \THM{R-n} below); and that each space from our class, apart
from the so-called degenerate spaces, has the property of extending arbitrary
partial dilations to global ones (consult \COR{abs-homo} below).\par
To formulate the main result of the paper, let us introduce necessary notation.
By \(d_e\) we will denote the Euclidean metric on \(\RRR^n\) (that is,
\(d_e(x,y) = \|x-y\|_2\) where \(\|x\|_2 = \sqrt{\sum_{k=1}^n x_k^2}\) for \(x =
(x_1,\ldots,x_n)\)). Further, for each prime power \(q = p^n\) (where \(p\) is
a prime and \(n > 0\) is an integer) let \(\FFF_q\) denote the (unique) field of
size \(q\) and \(\FFF_q((X))\) stand for the field of formal power series over
\(\FFF_q\). That is, \(\FFF_q((X))\) consists of all power series of the form
\(\sum_{n=k}^{\infty} a_n X^n\) where \(k \in \ZZZ\) (is arbitrary) and \(a_n
\in \FFF_q\) for all \(n \geq k\). For each non-zero \(v = \sum_{n=k}^{\infty}
a_n X^n\) from \(\FFF_q((X))\) we denote by \(|v|\) the least integer \(n\) such
that \(a_n \neq 0\); and, in addition, \(|0| \df \infty\).\par
Finally, let \(\delta\) stand for the discrete metric (on an arbitrary set
\(Y\)); that is, \(\delta(a,b) = 1\) for all distinct \(a\) and \(b\). We call
a metric space \emph{Heine-Borel} if all its closed balls are compact. Under
the convention that \(b^{-\infty} \df 0\) for each real \(b > 1\), the main
result of the paper reads as follows (fot the definition of two-point
dilation-homogeneity, see the next subsection).

\begin{thm}{main}
Each two-point dilation-homogeneous metric space that has more than one point
and contains a compact set with non-empty interior is isometric to precisely one
of the spaces listed below.
\begin{enumerate}[\upshape(Type1)]\addtocounter{enumi}{-1}
\item \(D_{\alpha,r}\) (where \(\alpha > 1\) and \(r > 0\) are, respectively,
 a cardinal number and a real number) consisting of a fixed set of cardinality
 \(\alpha\) equipped with the metric \(r \delta\).
\item \(F_{n,a,b}\) (where \(n > 1\) is an integer and \(1 \leq a < b\) are two
 real numbers) consisting of the set \(\prod_{s=1}^N \FFF_{p_s^{k_s}}\) equipped
 with the metric \(\rho((x_s)_{s=1}^N,(y_s)_{s=1}^N) = a b^{-\min_s |x_s-y_s|}\)
 where \(n = \prod_{s=1}^N p_s^{k_s}\) is the prime decomposition of \(n\).
\item \(R_{n,\alpha}\) (where \(n > 0\) is an integer and \(0 < \alpha \leq 1\)
 is a real number) consisting of the set \(\RRR^n\) equipped with the metric
 \(d_e^{\alpha}\).
\end{enumerate}
In particular, each two-point dilation-homogeneous metric space that contains
a compact set with non-empty interior and whose metric attains at least two
positive values is Heine-Borel.
\end{thm}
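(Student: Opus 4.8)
I would organise the whole argument around two invariants of $X$: the set $D\subseteq(0,\infty)$ of positive distances realised in $X$, and the group $\Lambda\subseteq(0,\infty)$ of ratios $\lambda(f)$ of bijective dilations $f$ of $X$. The elementary reductions come first. If $|X|\le 2$ then $X=D_{|X|,r}$; otherwise $|X|\ge 3$ and the hypothesis makes the dilation group $G$ transitive on $X$ (use a fixed auxiliary third point), so $X$ is locally compact. Fix $o\in X$, put $D_o=\{d(o,x):x\ne o\}$, and note $\lambda(G_o)=D_o/D_o$ (a dilation fixing $o$ has ratio $d(o,\cdot)/d(o,\cdot)$, and conversely any quotient of two $D_o$-values is realised by the hypothesis). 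If $D_o$ is a singleton then by homogeneity so is every $D_x$, forcing $d$ constant, i.e.\ $X=D_{|X|,r}$. Otherwise pick $s<t$ in $D_o$; applying the hypothesis with $a=o$ to a point of $S(o,s)$ and a point of $S(o,t)$ gives a dilation $g$ fixing $o$ with $\lambda(g)=t/s>1$, and since $g^k$ carries the compact ball $\bar B(o,\varepsilon)$ onto $\bar B(o,(t/s)^k\varepsilon)$, every closed ball about $o$, hence (by homogeneity) every closed ball of $X$, is compact. As any space whose metric attains at least two positive values has $|D|\ge 2$, this already proves the last assertion of the theorem. Henceforth $|D|\ge 2$, so $\Lambda\ne\{1\}$, $\lambda(G_o)=D_o/D_o\ne\{1\}$, and $D_o$ is closed in $(0,\infty)$ (closed balls are compact) and invariant under $\lambda(G_o)$ (dilations fixing $o$ permute the spheres about $o$).

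Next I would establish that $X$ is either totally disconnected or connected, and treat the connected case. The component $C$ of $o$ is $G_o$-invariant; since the isometric members of $G_o$ are transitive on each sphere $S(o,r)$, $C$ is the union of $\{o\}$ with a $\lambda(G_o)$-invariant family of full spheres, so $\{d(o,x):x\in C\}$ is an interval of left endpoint $0$ that is $\lambda(G_o)$-invariant; as $\lambda(G_o)\ne\{1\}$ such an interval is $\{0\}$ or $[0,\infty)$, i.e.\ $C=\{o\}$ (so $X$ is totally disconnected, by homogeneity) or $C=X$. In the connected case $D_o=(0,\infty)$, hence $\lambda(G_o)=(0,\infty)=\Lambda$; composing a dilation $o\to x$ with a suitable element of $G_o$ yields an isometric one, so $X$ is a connected locally compact two-point homogeneous metric space, and by Wang--Tits \cite{wan,tit} (equivalently, by the classification of two-transitive Lie transformation groups, cf.\ \cite{kra}) $X$ is a metric transform of a Euclidean space, a sphere, a real/complex/quaternionic/Cayley projective space, or a hyperbolic space. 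Each bounded model is killed by the existence of a dilation of ratio $\ne1$; each hyperbolic one is killed because such a dilation preserves metric betweenness, hence maps geodesics to geodesics, and would be a nontrivial self-similarity of a space admitting none. So $X=(\RRR^n,f\circ d_e)$; conjugating $\mathrm{Isom}(\RRR^n)=\mathrm{Isom}(\RRR^n,f\circ d_e)$ by a dilation and using that the normaliser of $\mathrm{Isom}(\RRR^n)$ in $\mathrm{Homeo}(\RRR^n)$ is the similarity group, every dilation is a Euclidean similarity; since similarities of every ratio occur, $f(\lambda t)/f(t)$ is independent of $t$ for a dense set of $\lambda$, and a monotone $f$ with this property is a power $f(t)=ct^\alpha$, with $0<\alpha\le1$ forced by the triangle inequality. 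Hence $X\cong R_{n,\alpha}$.

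The totally disconnected case is where the work lies, since here no external classification applies and the full two-transitivity must be used. First, $\lambda(G_o)$ cannot be dense — otherwise $D_o$, closed and $\lambda(G_o)$-invariant, would be $(0,\infty)$, forcing the connected case — so (being nontrivial) $\lambda(G_o)=b^{\ZZZ}$ with $b>1$; then $D_o/D_o=b^{\ZZZ}$ confines $D_o$ to one coset $a\,b^{\ZZZ}$, and in fact $\Lambda=b^{\ZZZ}$, because a dilation $g$ with $\lambda(g)\notin b^{\ZZZ}$ must move $o$, whereupon $d(o,g(o))$ would lie in the disjoint cosets $a\,b^{\ZZZ}$ (a $D_o$-value) and $\lambda(g)a\,b^{\ZZZ}$ (a $D_{g(o)}$-value). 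Since $D=\Lambda D_o$ and $D_o$ is $b^{\ZZZ}$-invariant, this gives $D_o=D=a\,b^{\ZZZ}$ for a unique $a\in[1,b)$; as the metric omits everything between consecutive powers, each closed ball $\bar B(x,ab^{-k})$ equals the open ball of radius $ab^{-(k-1)}$, so these balls are clopen and shrink to points. It remains to (i) show $d$ is an ultrametric — equivalently that at each level the balls are pairwise equal or disjoint, which is automatic for $b>2$ by a crude triangle-inequality count but for $1<b\le2$ needs a finer argument, using dilations fixing a vertex, to rule out ``obtuse triangles'' — after which $X$ is the end space of the level-indexed tree $T$ of clopen balls, locally finite (by local compactness) and with a constant branching number $n>1$ (by homogeneity), and $d(x,y)=ab^{-k}$ precisely when the balls of $x$ and $y$ merge at level $k$; and (ii) recover the algebraic model — the isometric dilations act on $T$ as level-preserving automorphisms and a ratio-$b$ dilation as a one-step level shift, and assembling these one furnishes $X$ with a topological-group structure isometric to $\prod_{s=1}^{N}\FFF_{p_s^{k_s}}((X))$ equipped with its valuation metric, where $n=\prod_{s=1}^N p_s^{k_s}$ is the prime decomposition of $n$; that is, $X\cong F_{n,a,b}$. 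I expect step (ii) — upgrading the purely metric/combinatorial tree picture to the precise Laurent-series model, and checking that the required two-transitivity is actually available there — to be the main obstacle. The proof then closes with the routine check that the three families are pairwise non-isometric and that their parameters are isometry invariants: the cardinality and single distance for $D_{\alpha,r}$; the topological dimension $n$ and the snowflake exponent $\alpha$ for $R_{n,\alpha}$; and, for $F_{n,a,b}$, the branching number $n$ (hence its prime factorisation), the ratio $b>1$ of the distance progression, and $a\in[1,b)$.
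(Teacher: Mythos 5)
The place where your argument genuinely breaks is the hand-off between your two cases. Your component dichotomy shows only that the connected component $C$ of $o$ is $\{o\}$ or all of $X$; it gives no mechanism for excluding $C=\{o\}$ when $D_o=(0,\infty)$, and your assertion that a dense (hence, by Heine--Borelness, full) distance set ``forces the connected case'' is exactly the statement that needs proof. A totally disconnected, metrically homogeneous, Heine--Borel space realising every positive distance is not ruled out by anything you have written, and such a space would slip through both of your analyses: your tree argument needs a discrete distance set, and your Wang--Tits argument needs connectedness. This is where the paper does its real topological-group work: \LEM{loc-comp} makes $\Lambda(X)$ a locally compact, $\sigma$-compact topological group on which $u\mapsto|u|$ is a continuous surjective homomorphism onto $(0,\infty)$, and \LEM{Lamb-non-tot-disc} shows $\Lambda(X)$ cannot be totally disconnected --- otherwise van Dantzig's theorem would give a compact open subgroup $K$, which has at most countable index and satisfies $|K|=\{1\}$, forcing the image of the norm to be countable instead of all of $(0,\infty)$ (the Open Mapping Theorem for homomorphisms of locally compact $\sigma$-compact groups enters here as well). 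Only then (\COR{X-not-tot-disc}) does one conclude that $X$ has a nontrivial component. You need this argument, or a substitute for it, before your case division closes.

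Beyond that, the two steps you flag as open are precisely the paper's hardest lemmas, so the proposal is not a proof there either. Ultrametricity in the discrete-distance case is \LEM{ultram}: among triples with $a=d(x,y)\le d(y,z)<d(x,z)$ one chooses a triple maximising $d(x,z)$ (possible because the triangle inequality bounds $d(x,z)$ and the distance set is discrete), applies the dilation fixing $x$ and carrying $y$ to $z$, and derives a contradiction from the possible positions of the image of $z$; your ``finer argument for $1<b\le 2$'' has to be something of this kind. For your step (ii), the paper does not reconstruct a group structure from the tree: it proves a uniqueness theorem (\THM{isometric-1}) --- a type 1 space is determined up to isometry by $(N,a,b)$, via an explicit coding of points by branches of the ball tree --- and separately exhibits models for every admissible parameter triple (\LEM{field} gives prime-power $N$ via $\FFF_q((X))$ with the valuation metric, \LEM{product-1} gives arbitrary $N>1$ via max-metric products), which dissolves your ``main obstacle'' entirely. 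Finally, in the connected case your route through the Wang--Tits classification of two-point homogeneous metric spaces is a legitimate alternative to the paper's (which applies Tits' classification of two-transitive groups to $\Lambda(X)$ itself and extracts the translation group $V\cong\RRR^n$ directly), but your elimination of the hyperbolic models is unsound as stated: a dilation of $(X,f\circ\rho)$ preserves metric betweenness for $f\circ\rho$, not for the underlying Riemannian distance $\rho$, and for a strictly concave transform $f$ there may be no nondegenerate metrically collinear triples at all, so ``maps geodesics to geodesics'' does not follow and that case needs a different argument.
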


As a consequence, we obtain the following two results.

\begin{thm}{R-n}
A metric space is isometric to \((\RRR^n,d_e)\) for some \(n > 0\) iff it is
two-point dilation-homogeneous and contains a compact set with non-empty
interior as well as three distinct points that are metrically collinear.
\end{thm}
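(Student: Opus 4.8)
The plan is to derive both implications directly from the classification in \THM{main}; no extra machinery is required.

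\textbf{Sufficiency.} Suppose $X$ is isometric to $(\RRR^n,d_e)$ for some integer $n>0$. Then $X$ contains the closed unit ball, which is compact with non-empty interior, and it contains three distinct metrically collinear points, namely the two endpoints of a segment together with its midpoint. To see that $X$ is two-point dilation-homogeneous, fix distinct $a,b,c\in\RRR^n$ and put $\lambda=\|c-a\|_2/\|b-a\|_2>0$. Since the orthogonal group $O(n)$ acts transitively on the unit sphere of $\RRR^n$, there is $U\in O(n)$ with $U\bigl((b-a)/\|b-a\|_2\bigr)=(c-a)/\|c-a\|_2$; then $x\mapsto a+\lambda U(x-a)$ is a bijective dilation of ratio $\lambda$ that fixes $a$ and sends $b$ to $c$. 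Hence $X$ has all three properties.

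\textbf{Necessity.} Conversely, assume $X$ is two-point dilation-homogeneous, contains a compact set with non-empty interior, and contains distinct points $x,y,z$ with $d(x,z)=d(x,y)+d(y,z)$. In particular $X$ has more than one point, so \THM{main} applies and $X$ is isometric to one of $D_{\alpha,r}$, $F_{n,a,b}$ or $R_{n,\alpha}$; I will discard the first two possibilities and show that $\alpha=1$ in the third. In $D_{\alpha,r}$ every distance between distinct points equals $r$, whence $d(x,z)=r<2r=d(x,y)+d(y,z)$, so no metrically collinear triple exists. The metric $\rho$ of $F_{n,a,b}$ is an ultrametric (immediate from its definition as a decreasing function of a minimum of non-Archimedean valuations), so for distinct points $\rho(x,z)\le\max\{\rho(x,y),\rho(y,z)\}<\rho(x,y)+\rho(y,z)$, again excluding a metrically collinear triple. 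Therefore $X$ is isometric to $R_{n,\alpha}=(\RRR^n,d_e^{\alpha})$ for some integer $n>0$ and some $\alpha\in(0,1]$, and in this model the hypothesis on $x,y,z$ reads $d_e(x,z)^{\alpha}=d_e(x,y)^{\alpha}+d_e(y,z)^{\alpha}$.

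It remains to rule out $\alpha<1$. With $A=d_e(x,y)$ and $B=d_e(y,z)$ (both positive, as $x,y,z$ are distinct), the triangle inequality for $d_e$ together with the strict subadditivity of $t\mapsto t^{\alpha}$ on $(0,\infty)$ — which holds precisely for $0<\alpha<1$ — gives
\[
 d_e(x,z)^{\alpha}\le(A+B)^{\alpha}<A^{\alpha}+B^{\alpha},
\]
contradicting the above equality. Hence $\alpha=1$ and $X$ is isometric to $(\RRR^n,d_e)$ for this $n>0$, which completes the proof; the value of $n$ is determined by the ``precisely one'' clause of \THM{main}, so nothing further need be checked. Beyond the bookkeeping that rests on \THM{main}, the only points that call for a little thought are the two ``no-betweenness'' observations — ultrametricity of the $F_{n,a,b}$ and the absence of metric collinearity in the snowflake spaces $(\RRR^n,d_e^{\alpha})$ with $\alpha<1$ — and the construction of dilations of $(\RRR^n,d_e)$, which reduces to transitivity of $O(n)$ on the unit sphere; I do not anticipate a genuine obstacle here.
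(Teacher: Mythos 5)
Your proof is correct and follows essentially the route the paper intends: the paper derives \THM{R-n} as a direct consequence of the classification in \THM{main} (via \COR{get-param}), leaving to the reader exactly the details you supply --- that degenerate and ultrametric spaces admit no metrically collinear triple, that strict subadditivity of \(t\mapsto t^{\alpha}\) rules out \(\alpha<1\) in \(R_{n,\alpha}\), and that \((\RRR^n,d_e)\) itself has all three properties. No gaps; nothing further is needed.
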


\begin{cor}{abs-homo}
If a two-point dilation-homogeneous metric space is Heine-Borel, then each
dilation between its arbitrary two subsets extends to a dilation of a global
space.
\end{cor}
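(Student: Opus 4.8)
The plan is to read off the classification from \THM{main}. The Heine-Borel hypothesis forces the spaces $D_{\alpha,r}$ (Type~0) to have \emph{finite} $\alpha$ (otherwise the closed ball of radius $r$ about any point is the whole infinite discrete space, which is not compact); so, up to isometry, the space $Y$ in question is a finite discrete space, or some $F_{n,a,b}$, or some $R_{n,\alpha}=(\RRR^n,d_e^{\alpha})$. Since the assertion is isometry-invariant it suffices to treat these three cases. I would also record once and for all that a Heine-Borel space is complete (a Cauchy sequence is bounded, hence sits in a compact closed ball, hence has a convergent subsequence, hence converges); completeness is needed in the third case.

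In the finite discrete case the metric takes a single positive value, so a dilation between two subsets each having at least two points must have ratio $1$; it is then a bijection between two equinumerous finite sets, and as such extends to a bijection of the whole space, every bijection of a finite discrete space being one of its dilations. (When the domain has at most one point, any bijection carrying that point to the right place works.) For $R_{n,\alpha}$ I would use that a self-map is a dilation of $(\RRR^n,d_e^{\alpha})$ of ratio $\lambda$ precisely when it is a bijective similarity of $(\RRR^n,d_e)$ of ratio $\lambda^{1/\alpha}$, and likewise for partial dilations versus partial similarities. Composing a given partial similarity $f\dd A\to B$ with a Euclidean homothety of the reciprocal ratio turns it into an isometric embedding of $A$ into $\RRR^n$, and this extends to a global Euclidean isometry by the classical argument: translate so that some chosen point of $A$ is fixed; observe that $f$ then preserves all the inner products $\scalar{x}{y}$ with $x,y\in A$ (equivalently $\|\sum_i c_i f(x_i)\|_2^2=\|\sum_i c_i x_i\|_2^2$), so it extends unambiguously to a linear isometry of $\operatorname{span} A$ onto $\operatorname{span} f(A)$, which in turn extends to an orthogonal self-map of $\RRR^n$. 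Undoing the homothety yields the required global dilation; a one- or zero-point domain is handled by translations.

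The substantial case is $Y=F_{n,a,b}$. Its metric is an ultrametric whose positive values are exactly $\{ab^{-m}\dd m\in\ZZZ\}$; the partition $P_m$ of $Y$ into closed balls of radius $ab^{-m}$ is a $\ZZZ$-indexed chain of refinements; each $P_m$-block contains exactly $n$ of the $P_{m+1}$-blocks; and each $P_m$ is countably infinite (a direct computation with the cosets of $\prod_s X^m\FFF_{p_s^{k_s}}[[X]]$). First I would reduce to partial isometries: for each $k\in\ZZZ$ the map multiplying every coordinate by $X^k$ is a dilation of $Y$ of ratio $b^{-k}$, and no other ratio is possible, so after composing with one such dilation we may assume that the given dilation $f\dd A\to B$ is a partial isometry (if $A$ has at most one point, a translation already does the job). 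Next I would pass to the \emph{ball tree} $T$: its level-$m$ vertices are the $P_m$-blocks, its edges record containment between consecutive levels; thus $T$ is a countable tree in which every vertex has exactly $n$ children and a unique parent, and any two vertices have a common ancestor. A partial isometry $f\dd A\to B$ induces a level-preserving, parent-respecting bijection $\hat f$ from the vertices of $T$ meeting $A$ onto those meeting $B$ (two points of $A$ share a $P_m$-block iff their $f$-images do); conversely, each level-preserving automorphism $\tau$ of $T$ yields the self-map of $Y$ sending $y$ to the unique point of $\bigcap_m\tau(B_m(y))$ — where $B_m(y)$ denotes the $P_m$-block of $y$, the intersection being a singleton by completeness since the radii tend to $0$ — and this self-map is a bijection, hence a dilation of ratio $1$, which realises $\tau$. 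So everything reduces to extending $\hat f$ to a level-preserving automorphism $\tau$ of the countable tree $T$ and transporting $\tau$ back to a self-map $F$ of $Y$; such an $F$ extends $f$ because $\bigcap_m\tau(B_m(y))=\bigcap_m B_m(f(y))=\{f(y)\}$ for $y\in A$.

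The real work, I expect, is the back-and-forth that extends $\hat f$ to a full level-preserving automorphism of $T$. Countability of $T$ means no transfinite recursion is needed, but one must use the homogeneity of the graded tree $T$ — every vertex has the same number $n\ge 2$ of children, every vertex has a parent, and every level carries infinitely many vertices — to show that a finite partial level-preserving automorphism can always be enlarged by one more vertex, whether the new vertex lies above, below, or off to the side of the current finite domain. Making that one-point-extension step precise, and checking carefully that the tree automorphism so produced genuinely comes from a bijection of $Y$ (this is where completeness, supplied by Heine-Borel, is used), is the crux of the argument; by contrast the Euclidean and finite cases are routine.
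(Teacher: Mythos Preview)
Your plan is sound and would yield a correct proof, but the route for the type~1 case is genuinely different from the paper's. The paper does \emph{not} descend to the concrete model \(F_{n,a,b}\) or build a ball tree; instead it argues abstractly for any type~1 space \(X\). After the same reduction to a partial isometry \(\phi\dd A\to X\) fixing a point \(a\), it invokes the already established ultrahomogeneity of \(X\) (\COR{ultra}) to see that for every finite \(F\subset X\) containing \(a\) there exists a global isometry agreeing with \(\phi\) on \(F\cap A\). It then runs a Tychonoff compactness argument: each such extension, viewed as an element of \(\prod_{x\in X}\bar B(\phi(a),d(a,x))\), lies in a closed set \(\LlL(F)\); the family \(\{\LlL(F)\}_F\) is centered, so the product (compact by Heine--Borel plus Tychonoff) contains a common point \(\Phi\), which is automatically isometric and extends \(\phi\); surjectivity comes from applying the same trick to \(\Phi^{-1}\).

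What each approach buys: the paper's argument is shorter, uses nothing about \(F_{n,a,b}\) specifically, and in fact shows that \emph{any} ultrahomogeneous Heine--Borel space has the isometry-extension property---the back-and-forth you plan is effectively hidden inside the cited folklore \THM{ultram-ultrah}. Your approach is more explicit and constructive, and makes the combinatorics of the extension visible; the one-point extension step you flag as the crux does go through (the domain of \(\hat f\) is closed under ancestors, so when adding a new vertex one first climbs to the nearest ancestor already in the domain and then descends, using that each vertex has exactly \(n\) children), but carrying this out carefully is precisely the work that the paper avoids by quoting ultrahomogeneity and replacing induction by compactness.
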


Another consequence of \THM{main} that is worth noting here is that each
two-point dilation-homogeneous locally compact metric space admits a structure
of an Abelian group with respect to which its metric is invariant.\par
The paper is organised as follows. Below we fix notation and terminology. In
the next section we collect basic properties of spaces under the question as
well as of dilation groups of metric spaces. We also introduce there three types
of investigated spaces and give a full classification of those of them that are
\emph{degenerate} (that is, of type 0). The third part is devoted to the full
classification of type 1 spaces. The key property proved therein is that all
such spaces are ultrametric. The fourth, final section deals with type 2 spaces.
A full classification of them is a direct consequence of Tits' classification of
two-transitive Lie groups. (However, we follow Kramer's paper \cite{kra} there.)

\subsection*{Notation and terminology}
A \emph{dilation} on a metric space \((X,d)\) is a bijection \(u\dd X \to X\)
such that
\(d(u(x),u(y)) = r u(x,y)\)
for all \(x, y \in X\) and some positive real constant \(r\). The above \(u\) is
an \emph{isometry} if the above \(r\) is equal to \(1\). We call \((X,d)\)
\emph{two-point dilation-homogeneous} if for any three distinct points \(a, b\)
and \(c\) of \(X\) there exists a dilation \(u\dd X \to X\) such that \(u(a) =
a\) and \(u(b) = c\). The group of all dilations on \(X\) will be denoted by
\(\Lambda(X)\) or, more specifically, \(\Lambda(X,d)\) (where \(d\) is
the metric of the space \(X\)). Similarly, \(X\) is said to be \emph{metrically
homogeneous} if for any two points \(a\) and \(b\) of \(X\) there exists
an isometry that sends \(a\) onto \(b\).\par
Three points \(a, b\) and \(c\) of \(X\) are called \emph{metrically collinear}
if \(d(x,z) = d(x,y)+d(y,z)\). The metric \(d\) is an \emph{ultrametric} if
\(d(x,z) \leq \max(d(x,y),d(y,z))\) for any \(x, y, z \in X\). A metric \(p\) on
an Abelian group \((G,+)\) is \emph{invariant} if \(p(x+z,y+z) = p(x,y)\) for
all \(x, y, z \in G\).\par
Closed balls in \(X\) will be denoted by \(\bar{B}_X(a,r)\) or, simply,
\(\bar{B}(a,r)\). All the notations used in \THM{main} remain valid throughout
the rest of the paper.\par
\textbf{From now on to the end of the paper, \boldmath{\((X,d)\)} is a two-point
dilation-homogeneous metric space that has more than one point and contains
a compact set with non-empty interior.}

\section{Preliminaries}

The proof of the above simple result is left to the reader.

\begin{lem}{homo-R}
For each \(u \in \Lambda(X)\) there exists a \textbf{unique} positive real
number \(c = |u|\) such that \(d(u(x),u(y)) = c d(x,y)\) for all \(x, y \in X\).
The function \((\Lambda(X),\circ) \ni u \mapsto |u| \in ((0,\infty),\cdot)\) is
a group homomorphism.
\end{lem}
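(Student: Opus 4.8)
The plan is to treat the statement as three separate, elementary claims and verify each one directly from the definition of a dilation, using the standing hypothesis that \(X\) has more than one point. Existence requires nothing: by definition, any \(u \in \Lambda(X)\) comes accompanied by some positive real \(r\) with \(d(u(x),u(y)) = r\,d(x,y)\) for all \(x,y \in X\).

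For uniqueness, I would fix two distinct points \(a,b \in X\) (which exist since \(X\) is not a singleton), so that \(d(a,b) > 0\). If \(r\) and \(r'\) both witness the dilation property for \(u\), then \(r\,d(a,b) = d(u(a),u(b)) = r'\,d(a,b)\), and cancelling the positive factor \(d(a,b)\) gives \(r = r'\). This legitimises the notation \(|u|\) for the (now unique) constant attached to \(u\).

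For the homomorphism part, given \(u,v \in \Lambda(X)\) and arbitrary \(x,y \in X\), I would compute \(d((u\circ v)(x),(u\circ v)(y)) = d(u(v(x)),u(v(y))) = |u|\,d(v(x),v(y)) = |u|\,|v|\,d(x,y)\), so the uniqueness already established forces \(|u\circ v| = |u|\,|v|\). Since \(\mathrm{id}_X\) is a dilation with constant \(1\), the relation \(u \circ u^{-1} = \mathrm{id}_X\) then yields \(|u^{-1}| = |u|^{-1}\), which completes the proof that \(u \mapsto |u|\) is a group homomorphism from \((\Lambda(X),\circ)\) into \(((0,\infty),\cdot)\). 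I do not expect any genuine obstacle here; the only substantive point worth flagging is that well-definedness of \(|u|\) depends on the metric taking a positive value, which is precisely what the "more than one point" assumption supplies.
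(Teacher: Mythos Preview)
Your proof is correct and is exactly the elementary verification one would expect; the paper itself gives no proof for this lemma, stating only that it is a ``simple result'' left to the reader. There is nothing to compare against, and your argument fills the gap cleanly.
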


\begin{dfn}{type}
Set \(\Gamma(X) \df \{d(x,y)\dd\ x \neq y\}\). \(X\) is said to be
\begin{itemize}
\item \emph{type 0} or \emph{degenerate} if \(\Gamma(X)\) consists of a single
 number; otherwise it is called \emph{non-degenerate};
\item \emph{type 1} if it is non-degenerate and \(\Gamma(X)\) is not dense in
 \((0,\infty)\);
\item \emph{type 2} if \(\Gamma(X)\) is dense in \((0,\infty)\).
\end{itemize}
(Note that \(\Gamma(X) \neq \varempty\).)
\end{dfn}

\begin{lem}{two-point}
For any two pairs \((a,b)\) and \((p,q)\) such that \(a \neq b\) and \(p \neq
q\) there is \(u \in \Lambda(X)\) satisfying \(u(a) = p\) and \(u(b) = q\). In
particular, \(X\) is metrically homogeneous.
\end{lem}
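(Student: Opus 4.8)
The statement asserts two things: (i) two-point homogeneity at the level of dilations — for any two nondegenerate pairs $(a,b)$ and $(p,q)$ there is a dilation carrying one to the other — and (ii) metric homogeneity as a consequence. The natural strategy is to reduce (i) to the hypothesis by composing dilations that fix one point, and to obtain (ii) by rescaling.

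\textbf{Step 1: a transitive-on-pairs argument from the defining property.} First I would fix $(a,b)$ with $a\neq b$ and $(p,q)$ with $p\neq q$ and try to produce $u\in\Lambda(X)$ with $u(a)=p$, $u(b)=q$. The idea is to move $a$ to $p$ first, then adjust the image of $b$. If $X$ has only two points this is trivial, so assume $X$ has at least three points. By two-point dilation-homogeneity applied to suitable triples of \emph{distinct} points, I can find dilations fixing a chosen point and sending any second point to any third. Concretely: pick a third point and use it as a ``hub''. Choose $v\in\Lambda(X)$ fixing some point $o$ and sending $a$ to $b$ (if $a,b,o$ are distinct), etc. The cleanest route is: show first that $\Lambda(X)$ acts transitively on $X$ (Step 1a), then that the stabiliser of a point acts transitively on the complement of that point (Step 1b, which is essentially the hypothesis), and conclude transitivity on ordered pairs of distinct points by the standard orbit–stabiliser composition $u = u_2\circ u_1$ where $u_1(a)=p$ and $u_2$ fixes $p$ and sends $u_1(b)$ to $q$ (legitimate since $u_1(b)\neq p$ and $q\neq p$).

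\textbf{Step 1a: transitivity on points.} Given $a\neq p$, I want a dilation sending $a$ to $p$. Pick any third point $c$ distinct from both (possible if $|X|\ge 3$; the case $|X|=2$ is handled separately). Applying the hypothesis to the triple $(c,a,p)$ gives $w\in\Lambda(X)$ with $w(c)=c$ and $w(a)=p$, so in particular $w(a)=p$. (If $|X|=2$, say $X=\{a,p\}$, the hypothesis with some triple is vacuous, but then any bijection of a two-point metric space is an isometry, hence a dilation, and the swap does the job.) This establishes that $\Lambda(X)$ is transitive on $X$.

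\textbf{Step 2: metric homogeneity.} Given $a,b\in X$, Step 1a produces a dilation $u$ with $u(a)=b$; let $c=|u|>0$ be its ratio from \LEM{homo-R}. I then need to correct the scaling to get an \emph{isometry}. The plan is: precompose or postcompose with a dilation fixing $b$ whose ratio is $1/c$. Such a dilation exists because, by the hypothesis (or by Step 1b), the stabiliser $\Lambda(X)_b$ contains dilations of ratio $c'$ for a rich set of $c'$ — indeed, applying the hypothesis to a triple $(b,x,y)$ with $d(b,x)\neq d(b,y)$ gives a dilation fixing $b$ with ratio $d(b,y)/d(b,x)\neq 1$, and by taking powers and using that $|\cdot|$ is a homomorphism (\LEM{homo-R}) one generates a subgroup of $(0,\infty)$ containing a number $\neq 1$. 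The subtlety is whether $1/c$ itself is realised. This is where the non-degeneracy matters: if $\Gamma(X)$ is a single point the metric is a rescaled discrete metric and every dilation is an isometry, so homogeneity is immediate; otherwise one argues that the group $|\Lambda(X)_b|\le(0,\infty)$ is actually all of $(0,\infty)$ (or at least contains every ratio that arises as $|u|$ for $u\in\Lambda(X)$), so one can always rescale $u$ to ratio $1$. Composing, $u$ followed by the rescaling dilation fixing $b$ yields an isometry sending $a$ to $b$.

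\textbf{Main obstacle.} The genuinely delicate point is Step 2 — matching the scaling factor exactly. Transitivity on pairs (Step 1) is essentially a formal consequence of the definition via orbit–stabiliser bookkeeping, modulo the trivial small-cardinality cases. But to deduce \emph{metric} homogeneity one must know that the ratios of dilations fixing a given point form a subgroup of $(0,\infty)$ large enough to cancel $|u|$ for any $u$ moving $a$ to $b$. The clean way around this is to observe that if $u(a)=b$ with $|u|=c$, and $v$ is \emph{any} dilation with $v(b)=b$ and $|v|=1/c$, then $v\circ u$ is the desired isometry; and the existence of such $v$ follows because $u^{-1}$ composed with a point-transitivity map back to $b$, or more directly because the image group $|\Lambda(X)|\le(0,\infty)$ is a subgroup and the stabiliser of $b$ surjects onto it (conjugation by a point-mover), so $1/c=|u^{-1}|$ is hit inside $\Lambda(X)_b$. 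I would write this conjugation argument carefully: if $g(b)=a$ then $g\circ u\circ g^{-1}$ wait — rather, $u^{-1}$ has ratio $1/c$ but fixes $b$ only if $u(b)=b$; instead take $h$ with $h(a)=b$ arbitrary and note $h\circ u^{-1}\circ h^{-1}$... the correct statement is that $\{|u|:u\in\Lambda(X)_b\}=\{|u|:u\in\Lambda(X)\}$ by conjugating with a point-transitivity element, and this equality is what makes the rescaling possible.
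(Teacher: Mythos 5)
Your Step 1 (transitivity of \(\Lambda(X)\) on ordered pairs of distinct points) is sound and is in substance the paper's own argument: the paper likewise composes point-fixing dilations through an auxiliary third point, writing the chain \((a,b)\equiv(c,b)\equiv(c,a)\equiv(p,a)\equiv(p,q)\) where you write the orbit--stabiliser decomposition \(u=u_2\circ u_1\). The gap is in Step 2. You correctly reduce metric homogeneity to producing \(v\in\Lambda(X)_b\) with \(|v|=1/c\), but the argument you offer for its existence --- conjugating \(u^{-1}\) by a ``point-mover'' \(h\) --- does not work: \(h\circ u^{-1}\circ h^{-1}\) fixes \(b\) if and only if \(h^{-1}(b)\) is a fixed point of \(u\), and a dilation need not have any fixed point (completeness of \(X\) is not available at this stage of the paper, and isometric dilations can be fixed-point free in any case). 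Your fallback claim that \(\{|v|\dd v\in\Lambda(X)_b\}=\{|u|\dd u\in\Lambda(X)\}\) is in fact true, but it is not a formal consequence of conjugation; a genuine proof needs a separate argument (for instance via the symmetry \(d(x,b)=d(b,x)\) together with the observation that \(\{d(b,z)\dd z\neq b\}\) is a single coset of the subgroup \(\{|v|\dd v\in\Lambda(X)_b\}\)), and you do not supply one. As written, Step 2 is therefore incomplete, and you acknowledge as much in your ``main obstacle'' paragraph.

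The irony is that your Step 1 already contains the fix and makes the whole rescaling discussion unnecessary: apply part (i) to the pairs \((a,b)\) and \((b,a)\). The resulting dilation \(u\) satisfies \(u(a)=b\) and \(u(b)=a\), hence \(d(u(a),u(b))=d(b,a)=d(a,b)\), so \(|u|=1\) and \(u\) is an isometry carrying \(a\) to \(b\). This one-line deduction from \((a,b)\equiv(b,a)\) is exactly how the paper concludes.
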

\begin{proof}
We may and do assume that \(X\) has more than two points. For simplicity, we
will write (here in this proof) \((a,b) \equiv (p,q)\) to express that
the conclusion of the lemma holds. First note that \((x,y) \equiv (x,z)\) and
\((y,x) \equiv (z,x)\) for any \(x \in X\) and two points \(y\) and \(z\) of
\(X\) each of which is distinct from \(x\) (thanks to two-point
dilation-homogeneity of \(X\)). Now choose arbitrary point \(c \in X\) that
differs from both \(a\) and \(b\). Then:
\begin{itemize}
\item if \(a = p\), then \((a,b) = (p,b) \equiv (p,q)\);
\item if \(a \neq p\), then \((a,b) \equiv (c,b) \equiv (c,a) \equiv (p,a)
 \equiv (p,q)\),
\end{itemize}
which proves the first part of the lemma. Consequently, \((a,b) \equiv (b,a)\)
for any two distinct points \(a\) and \(b\) of \(X\), which implies that \(X\)
is metrically homogeneous.
\end{proof}

\begin{lem}{group}
Let \(c\) be an arbitrary element of \(\Gamma(X)\) and let \(V \df \{|u|\dd\
u \in \Lambda(X)\}\). Then \(V\) is a subgroup of \(((0,\infty),\cdot)\) and
\(\Gamma(X) = c \cdot V\ (= \{cv\dd\ v \in V\})\). In particular, if \(X\) is
type 1, then \(\Gamma(X) = \{a b^k\dd\ k \in \ZZZ\}\) for a unique pair
\((a,b)\) of reals such that \(1 \leq a < b\).
\end{lem}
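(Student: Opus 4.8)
The plan is to reduce everything to the two preceding lemmas together with the elementary classification of subgroups of $(\RRR,+)$. First, by \LEM{homo-R} the map $u \mapsto |u|$ is a group homomorphism $(\Lambda(X),\circ) \to ((0,\infty),\cdot)$, so $V$, being its image, is a subgroup of $((0,\infty),\cdot)$; this settles the first assertion at once. For the equality $\Gamma(X) = cV$ I would argue by double inclusion. Fix $x \neq y$ with $d(x,y) = c$. If $u \in \Lambda(X)$, then $u(x) \neq u(y)$ (as $u$ is a bijection) and $d(u(x),u(y)) = |u|\,c$, so $|u|\,c \in \Gamma(X)$; thus $cV \subseteq \Gamma(X)$. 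Conversely, if $c' = d(p,q) \in \Gamma(X)$ with $p \neq q$, then \LEM{two-point} yields $u \in \Lambda(X)$ with $u(x) = p$ and $u(y) = q$, whence $c' = d(u(x),u(y)) = |u|\,d(x,y) = |u|\,c \in cV$. Hence $\Gamma(X)$ is precisely the coset $cV$ of the multiplicative subgroup $V$.

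Now assume $X$ is type 1. Since multiplication by the positive constant $c$ is a self-homeomorphism of $(0,\infty)$ carrying $V$ onto $\Gamma(X)$, the conditions that $X$ be non-degenerate and that $\Gamma(X)$ not be dense in $(0,\infty)$ say exactly that $V$ is a nontrivial, non-dense subgroup of $((0,\infty),\cdot)$. Transporting $V$ into $(\RRR,+)$ via $\log$ and invoking the fact that a subgroup of $(\RRR,+)$ is either dense or of the form $t\ZZZ$, I would conclude $V = \{b^k\dd\ k \in \ZZZ\}$ for some real $b > 1$ (taking the generator $> 1$). Then $\Gamma(X) = \{c b^k\dd\ k \in \ZZZ\}$. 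Because the intervals $[b^j,b^{j+1})$, $j \in \ZZZ$, partition $(0,\infty)$, there is a unique integer $m$ with $c b^m \in [1,b)$; putting $a \df c b^m$ gives $1 \leq a < b$ and $\Gamma(X) = \{a b^k\dd\ k \in \ZZZ\}$, as desired.

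For uniqueness I would observe that $\Gamma(X)$ recovers $V$ as the set of all quotients $s/t$ with $s, t \in \Gamma(X)$ (indeed $s/t = v_1/v_2 \in V$ when $s = cv_1,\ t = cv_2$, and every $v \in V$ arises as $(cv)/c$), so $b = \min\bigl(V \cap (1,\infty)\bigr)$ is determined by $\Gamma(X)$; and then $a$ is forced to be the unique element of $\Gamma(X) \cap [1,b)$ (by the same tiling argument, $a b^k \in [1,b)$ only for $k = 0$). Thus the pair $(a,b)$ is unique. I expect no genuine obstacle here: the only mildly delicate points are the normalisation to $1 \leq a < b$ and the uniqueness claim, and both are handled by the single observation that the intervals $[b^j,b^{j+1})$ tile $(0,\infty)$.
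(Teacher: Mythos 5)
Your proposal is correct and follows essentially the same route as the paper: the subgroup claim comes from \LEM{homo-R}, the identity $\Gamma(X)=cV$ comes from \LEM{two-point} exactly as you argue, and the paper simply leaves the type~1 normalisation (which you carry out via the classification of non-dense subgroups of $(\RRR,+)$ and the tiling by $[b^j,b^{j+1})$) to the reader with the phrase ``the additional claim now easily follows.'' No gaps; your write-up just supplies the details the paper omits.
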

\begin{proof}
There are two distinct points \(x\) and \(y\) of \(X\) such that \(d(x,y) = c\).
We infer from \LEM{two-point} that \(\Gamma(X) = \{c d(u(x),u(y))\dd\ u \in
\Lambda(X)\}\) and, consequently, that \(\Gamma(X) = c V\). The additional claim
now easily follows.
\end{proof}

\begin{lem}{H-B}
If \(X\) is non-degenerate, then it is Heine-Borel and non-compact. In
particular, if \(X\) is type 2, then \(\Gamma(X) = (0,\infty)\).
\end{lem}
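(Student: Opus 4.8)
The plan is to deduce Heine--Borelness from a \emph{single} genuinely compact ball by transporting it around with dilations, then to read off non-compactness from the unboundedness of \(\Gamma(X)\), and finally to obtain the type-2 claim by showing that \(\Gamma(X)\) is closed in \((0,\infty)\). First I would turn the hypothesis ``\(X\) contains a compact set with non-empty interior'' into the statement that there are a point \(a_0 \in X\) and a real \(s_0 > 0\) with \(\bar{B}(a_0,s_0)\) compact: the interior of the given compact set contains some open ball \(B(a_0,2s_0)\), and \(\bar{B}(a_0,s_0)\) is a closed subset of a compact set. Next, since \(X\) is non-degenerate, \(V \df \{|u|\dd u \in \Lambda(X)\}\) is not the trivial group \(\{1\}\) (otherwise \(\Gamma(X)\) would be a single number by \LEM{group}), and being a subgroup of \(((0,\infty),\cdot)\) it is unbounded above: if \(v \in V\) and \(v \neq 1\), then one of \(v, v^{-1}\) exceeds \(1\), and the positive powers of that element tend to \(\infty\).

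For each \(c \in V\) pick \(u \in \Lambda(X)\) with \(|u| = c\); since \(u\) and \(u^{-1}\) are both Lipschitz, \(u\) is a homeomorphism carrying \(\bar{B}(a_0,s_0)\) onto \(\bar{B}(u(a_0),cs_0)\), so the latter ball is compact; composing with an isometry supplied by \LEM{two-point} (metric homogeneity) moves this ball, showing that \(\bar{B}(a,cs_0)\) is compact for every \(a \in X\). As \(\{cs_0\dd c \in V\}\) is unbounded above and a closed ball of smaller radius is a closed subset of a concentric closed ball of larger radius, every closed ball of \(X\) is compact; that is, \(X\) is Heine--Borel. For non-compactness, \LEM{group} gives \(\Gamma(X) = c\cdot V\) for some \(c > 0\), hence \(\Gamma(X)\) is unbounded above as well; so \(X\) contains pairs of points at arbitrarily large distance and therefore cannot be compact.

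Finally, suppose \(X\) is type 2, so that \(\Gamma(X)\) is dense in \((0,\infty)\); I would prove that \(\Gamma(X)\) is also closed in \((0,\infty)\), which forces \(\Gamma(X) = (0,\infty)\). Let \(t_n \in \Gamma(X)\) with \(t_n \to t \in (0,\infty)\); the sequence \((t_n)\) is bounded, say by \(M\). Choosing \(x_n \neq y_n\) with \(d(x_n,y_n) = t_n\) and, by metric homogeneity, an isometry taking \(x_n\) to \(a_0\), one obtains points \(z_n \neq a_0\) with \(d(a_0,z_n) = t_n \leq M\). These lie in the compact ball \(\bar{B}(a_0,M)\), so a subsequence converges to some \(z\); continuity of \(d\) yields \(d(a_0,z) = t > 0\), whence \(z \neq a_0\) and \(t = d(a_0,z) \in \Gamma(X)\). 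A subset of \((0,\infty)\) that is both dense and closed in \((0,\infty)\) equals \((0,\infty)\).

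The main --- indeed essentially the only non-routine --- obstacle is this last closedness argument, as it is the one place where compactness is used in an essential rather than cosmetic way; everything preceding it is the bookkeeping principle ``a dilation is a ball-rescaling homeomorphism'' together with the elementary fact that non-trivial multiplicative subgroups of \((0,\infty)\) are unbounded.
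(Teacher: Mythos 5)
Your proposal is correct and follows essentially the same route as the paper: transport a single compact ball by dilations of large ratio (plus metric homogeneity) to get Heine--Borelness, read non-compactness off the unboundedness of \(\Gamma(X)\), and then combine Heine--Borelness with density to conclude \(\Gamma(X)=(0,\infty)\) in the type-2 case. The only cosmetic difference is in the last step, where the paper observes that \(\Gamma(X)\cap[\frac1r,r]\) is the continuous image of a compact annulus while you verify sequential closedness of \(\Gamma(X)\) directly --- the two arguments are interchangeable.
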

\begin{proof}
Fix \(a \in X\) and a radius \(r > 0\). Since \(X\) contains a compact set with
non-empty interior, we infer that there is \(b \in X\) and \(\epsi > 0\) such
that \(\bar{B}(b,\epsi)\) is compact. E.g. thanks to Lemmas~\ref{lem:two-point}
and \ref{lem:group}, there exists \(u \in \Lambda(X)\) such that \(u(b) = a\)
and \(|u| > \frac{r}{\epsi}\). Then \(\bar{B}(a,r) \subset u(\bar{B}(b,\epsi))\)
and therefore \(\bar{B}(a,r)\) is compact. Since the set \(\Gamma(X)\) is
unbounded, \(X\) is non-compact.\par
Now assume \(X\) is type 2. Since \(X\) is metrically homogeneous (by
\LEM{two-point}), it follows that for each \(r > 1\) the set \(\{d(a,x)\dd\
\frac1r \leq d(a,x) \leq r\}\) is compact (where \(a\) is arbitrarily fixed
element of \(X\)) and coincides with \(\Gamma(X) \cap [\frac1r,r]\). So,
\([\frac1r,r] \subset \Gamma(X)\), as \(\Gamma(X)\) is dense in \((0,\infty)\).
\end{proof}

The following is quite an easy result. We skip its trivial proof.

\begin{pro}{type-0}
The class of all degenerate two-point dilation-homogeneous metric spaces
coincides with the class of all spaces whose metrics are of the form \(r
\delta\) where \(r > 0\). In particular, all such spaces are locally compact and
Heine-Borel are precisely those of them whose underying spaces are finite.
Consequently, each such a space is uniquely determined (up to isometry) by its
size and the diameter; and the conclusion of \COR{abs-homo} is valid for type 0
spaces.
\end{pro}

The last result of this section generalises a well-known property of isometry
groups of Heine-Borel spaces. Readers interested in isometry groups of metric
spaces are referred to \cite{g-k} or \cite{pn1,pn2}.

\begin{lem}{loc-comp}
For any Heine-Borel metric space \((Z,\rho)\) the group \(G \df
\Lambda(Z,\rho)\) equipped with the pointwise convergence topology is
a topological group that is both locally compact and \(\sigma\)-compact. More
specifically, for any two distinct points \(a\) and \(b\) in \(Z\) and
an arbitrary real number \(R\) greater than \(1\) the set
\[U(a,b,R) \df \{u \in G\dd\ \max(\rho(u(a),a),\rho(u^{-1}(a),a),\rho(u(b),b),
\rho(u^{-1}(b),b)) \leq R\}\]
is a compact neighbourhood of the identity of \(G\).
\end{lem}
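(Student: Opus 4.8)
The plan is to regard $G=\Lambda(Z,\rho)$ as a subspace of the product space $Z^Z$, whose product topology restricts on $G$ to the pointwise convergence topology; since $Z$ is Hausdorff, so is $G$. Every verification below rests on the dilation identity $\rho(u(x),u(y))=|u|\,\rho(x,y)$ and the homomorphism $u\mapsto|u|$ of \LEM{homo-R}, together with the consequence $\rho(u^{-1}(x),x)=|u|^{-1}\rho(u(x),x)$ obtained by applying the dilation identity to $u^{-1}$ and the pair $x,u(x)$. First I would check that $G$ is a topological group. Note that $u_i\to u_0$ pointwise forces $|u_i|\to|u_0|$: evaluate at two fixed distinct points $a,b$ and use joint continuity of $\rho$ in $\rho(u_i(a),u_i(b))=|u_i|\rho(a,b)$. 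Hence $(|u_i|)$ is eventually bounded, so for composition $\rho(u_i(v_i(x)),u_0(v_0(x)))\le|u_i|\,\rho(v_i(x),v_0(x))+\rho(u_i(v_0(x)),u_0(v_0(x)))\to 0$ whenever $u_i\to u_0$ and $v_i\to v_0$ pointwise; and for inversion, writing $y:=u_0^{-1}(x)$, we get $\rho(u_i^{-1}(x),y)=|u_i|^{-1}\rho(x,u_i(y))\to 0$ since $u_i(y)\to u_0(y)=x$.

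Next I would show that $U(a,b,R)$ is a neighbourhood of $\mathrm{id}$ and is compact. For the first claim, put $\epsi:=\min(R/2,\rho(a,b)/4)$; then $\{u\in G:\rho(u(a),a)<\epsi,\ \rho(u(b),b)<\epsi\}$ is pointwise-open, contains $\mathrm{id}$, and lies in $U(a,b,R)$, because these inequalities give $\bigl|\,|u|\rho(a,b)-\rho(a,b)\bigr|<2\epsi\le\rho(a,b)/2$, hence $|u|>1/2$, hence $\rho(u^{-1}(a),a)=|u|^{-1}\rho(u(a),a)<2\epsi\le R$ and likewise at $b$. For compactness, first apply the triangle inequality to $\rho(u(a),u(b))=|u|\rho(a,b)$ and, separately, to $\rho(u^{-1}(a),u^{-1}(b))=|u|^{-1}\rho(a,b)$ — this is where the constraints on $u^{-1}$ in the definition of $U(a,b,R)$ enter — to bound both $|u|$ and $|u|^{-1}$ above by a constant $M=M(a,b,R)$; then for each $x\in Z$ both $u(x)$ and $u^{-1}(x)$ lie in the \emph{compact} ball $K_x:=\bar B(a,M\rho(x,a)+R)$, compactness being exactly the Heine-Borel hypothesis. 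Given a net $(u_i)$ in $U(a,b,R)$, Tychonoff's theorem applied to $\prod_{x}K_x\times[M^{-1},M]\times\prod_{x}K_x$ produces a subnet along which $u_i(x)\to v(x)$ and $u_i^{-1}(x)\to w(x)$ for every $x$ and $|u_i|\to r$. Passing to the limit in $\rho(u_i(x),u_i(y))=|u_i|\rho(x,y)$ shows $v$ scales distances by $r$, and likewise $w$ scales by $r^{-1}$; and from $u_i^{-1}(u_i(x))=x$ together with $\rho(u_i^{-1}(v(x)),x)=|u_i|^{-1}\rho(v(x),u_i(x))\to 0$ one deduces $w(v(x))=x$, and symmetrically $v(w(x))=x$, so $v\in\Lambda(Z)$ with inverse $w$. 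Finally $\rho(v(a),a)\le R$, and the other three inequalities, survive the limit, so $v\in U(a,b,R)$ and the chosen subnet converges to it; hence $U(a,b,R)$ is compact.

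The conclusion is then immediate: $G$ is a topological group with a compact neighbourhood of $\mathrm{id}$, so by homogeneity it is locally compact; and $G=\bigcup_{n\ge 2}U(a,b,n)$ for any fixed distinct $a,b\in Z$ — each $u\in G$ belongs to $U(a,b,n)$ once $n$ exceeds the finite maximum of the four relevant distances — so $G$ is $\sigma$-compact. I expect the one genuinely delicate step to be proving that the pointwise limit $v$ is again a \emph{dilation}, in particular that it is surjective; this is precisely what the $u^{-1}$-conditions in the definition of $U(a,b,R)$ are designed to secure, by keeping the inverse net $(u_i^{-1})$ inside a compact region and bounding $|u|^{-1}$, so that a limit $w$ can be extracted and shown to invert $v$. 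The remaining work — the two topological-group axioms and the neighbourhood claim — is routine manipulation of the dilation identity and of the scaling factor $|u|$.
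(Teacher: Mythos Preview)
Your argument is correct and follows essentially the same route as the paper: bound the dilation factor on $U(a,b,R)$, trap both $u$ and $u^{-1}$ pointwise in compact balls $K_x$, invoke Tychonoff to extract a convergent sub(net/sequence) of pairs $(u_i,u_i^{-1})$, and verify that the two limits are mutual inverses. The only technical difference is that the paper first establishes metrisability of $G$ (via a countable dense $D\subset Z$) and then works with sequences and the countable product $\prod_{z\in D}K_z$, whereas you work directly with nets and the full product $\prod_{x\in Z}K_x$; your version is slightly cleaner in that it avoids the separate metrisability step.
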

\begin{proof}
Without loss of generality, we may and do assume that \(Z\) has more than one
point. So, we may (and do) fix two distinct points \(a\) and \(b\) of \(Z\). For
the purpose of this proof, let us call a function \(v\dd Z \to Z\)
\emph{homothetic} if \(\rho(v(x),v(y)) = c \rho(x,y)\) for all \(x, y \in Z\)
and some positive real constant \(c\) (so, a homothetic function need not to be
surjective). As for dilations, we denote this unique constant \(c\) by \(|v|\).
Note that \(G \ni u \mapsto |u| \in (0,\infty)\) is a continuous group
homomorphism, as \(|u| = \frac{\rho(u(a),u(b))}{\rho(a,b)}\). More generally,
if a net \((v_{\sigma})_{\sigma\in\Sigma}\) of homothetic functions on \(Z\)
converges pointwise to a function \(w\dd Z \to Z\), then \(w\) is homothetic and
\(|w| = \lim_{\sigma\in\Sigma} |v_{\sigma}|\). In particular, if, in addition,
also \((v'_{\sigma})_{\sigma\in\Sigma}\) is a net of homothetic functions that
converge pointwise to \(w'\dd Z \to Z\), then the net \((v'_{\sigma} \circ
v_{\sigma})_{\sigma\in\Sigma}\) converges pointwise to \(w' \circ w\). (Indeed,
\(\max(|v_{\sigma}|,|v'_{\sigma}|) \leq M\) for all \(\sigma \geq \sigma_0\) and
some \(\sigma_0\in\Sigma\) and \(M > 0\); then, for all \(\sigma \geq \sigma_0\)
and each \(z \in Z\), \(\rho((v'_{\sigma} \circ v_{\sigma})(z),(w' \circ w)(z))
\leq \rho(v'_{\sigma}(v_{\sigma}(z)),v'_{\sigma}(w(z))+\rho(v'_{\sigma}(w(z)),
w'(w(z))) \leq M \rho(v_{\sigma}(z),w(z))+\rho(v'_{\sigma}(w(z)),w'(w(z))) \to
0\).) We conclude that the binary action of \(G\) is continuous, and that \(G\)
is metrisable (as \(Z\) is separable and for any countable dense subset \(D\) of
\(Z\) the assignment \(G \ni g \mapsto g\restriction{D} \in Z^D\) defines
a topological embedding of \(G\) into a metrisable space \(Z^D\) of all
functions from \(D\) to \(Z\), equipped with the pointwise convergence
topology). Moreover, if \(u_1,u_2,\ldots \in G\) converge pointwise to \(g \in
G\), then
\[\rho(u_n^{-1}(z),g^{-1}(z)) = \frac{\rho(g(g^{-1}(z)),u_n(g^{-1}(z)))}{|u_n|}
\to 0 \quad (n\to\infty)\]
(because \(\lim_{n\to\infty} |u_n| = |g| > 0\)). So, \(G\) is a topological
group.\par
Now we will show that \(U(a,b,R)\) is a compact set in \(G\) (it is clear that
it is a neighbourhood of the identity). To this end, fix for a while arbitrary
\(f \in U(a,b,R)\) and note that \(f^{-1} \in U(a,b,R)\) as well. Moreover,
\(|f| \cdot \rho(a,b) = \rho(f(a),f(b)) \leq \rho(f(a),a)+\rho(a,b)+\rho(b,f(b))
\leq \rho(a,b)+2R\). So, \(|f| \leq C\) where \(C \df 1+\frac{2R}{\rho(a,b)}\).
In particular, for any \(z \in Z\), \(\rho(f(z),a) \leq \rho(f(z),f(a))+
\rho(f(a),a)) \leq C \rho(z,a)+R\). Equivalently,
\begin{equation}\label{eqn:aux1}
f(z) \in K_z \qquad (z \in Z)
\end{equation}
where \(K_z \df \bar{B}(a,C \rho(z,a)+R)\) is a compact subset of \(Z\). Now
take an arbitrary sequence \((u_n)_{n=1}^{\infty}\) of functions from
\(U(a,b,R)\). Then also \(u_n^{-1} \in U(a,b,R)\). If \(D\) is a countable dense
set in \(Z\), then \((u_n\restriction{D},u_n^{-1}\restriction{D}) \in
(\prod_{z \in D} K_z)^2\) and it follows from the compactness of the last
Cartesian product that after passing to a subsequence, we may and do assume that
\(\lim_{n\to\infty} u_n(z) = v_o(z)\) and \(\lim_{n\to\infty} u_n^{-1}(z) =
w_o(z)\) for all \(z \in D\) and some functions \(v_o, w_o\dd D \to Z\). Now
a standard argument shows that both \(v_o\) and \(w_o\) extend to \(v, w\dd Z
\to Z\) and that the maps \(u_1,u_2,\ldots\) (resp. \(u_1^{-1},u_2^{-1},
\ldots\)) converge pointwise to \(v\) and \(w\). In particular, both \(v\) and
\(w\) are homothetic and the maps \(u_n \circ u_n^{-1}\) (resp. \(u_n^{-1} \circ
u_n\)) converge pointwise to \(v \circ w\) (resp. to \(w \circ v\)).
Consequently, \(v, w \in G\) and \(w = u^{-1}\). Hence \(v \in U(a,b,R)\) and
thus \(G\) is locally compact. A note that \(G = \bigcup_{n=1}^{\infty}
U(a,b,n)\) completes the proof.
\end{proof}

\section{Type 1 spaces}

This section is devoted to a full classification (up to isometry) of all
(two-point dilation-homogeneous locally compact) type 1 spaces. To this end, we
fix such a space \((X,d)\). It follows from \LEM{group} that there are unique
real numbers \(a\) and \(b\) such that \(1 \leq a < b\) and
\begin{equation}\label{eqn:Gamma(X)-1}
\Gamma(X) = \{ab^k\dd\ k \in \ZZZ\}.
\end{equation}
From now on to the end of this section, \(a\) and \(b\) are reserved to denote
these two numbers.\par
The following result is crucial for classifying type 1 spaces.

\begin{lem}{ultram}
The metric of \(X\) is an ultrametric.
\end{lem}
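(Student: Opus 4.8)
The plan is to argue by contradiction: suppose the ultrametric inequality fails for some triple, pick a "worst" such triple, and derive a contradiction using the dilation-homogeneity to rescale distances. Concretely, suppose there exist $x,y,z\in X$ with $d(x,z) > \max(d(x,y),d(y,z))$. Writing each distance in the form $ab^k$ using \eqref{eqn:Gamma(X)-1}, this means $d(x,z) = ab^m$ while $d(x,y) = ab^i$ and $d(y,z) = ab^j$ with $i,j < m$. By the triangle inequality we also have $ab^m = d(x,z) \le d(x,y)+d(y,z) = ab^i + ab^j \le 2 a b^{\max(i,j)}$, so $b^m \le 2 b^{\max(i,j)}$, i.e. $b^{m-\max(i,j)} \le 2$. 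Since $m > \max(i,j)$ and $b > 1$, this forces $b \le 2$ and moreover constrains the "gap" $m-\max(i,j)$ to be small (it is $1$ once $b>\sqrt 2$). So any violation of the ultrametric inequality is already nearly an equality, which is the lever to exploit.

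The key step is then to amplify a near-violation into an impossible one. Here is how I would do it. Given the triple $(x,y,z)$ above, use two-point dilation-homogeneity (via \LEM{two-point}, together with \LEM{group} giving us dilations of every scale in $V = b^{\ZZZ}$) to find, for a suitably large integer $N$, a dilation $u$ with $|u| = b^{-N\cdot(m - \max(i,j))}$ or similar, and chain triangle inequalities along a long geodesic-like path built from copies of the configuration. More precisely: by metric homogeneity we may place points freely, and by rescaling we can assume $d(x,y)$ is the largest value $ab^m$ and iterate — construct a sequence of points $x = p_0, p_1, \dots, p_k = z$ with each consecutive pair at distance $\le ab^{m-1}$ (possible because $d(x,z)=ab^m$ is not the strict max of two smaller values would be contradicted if... ) — then the triangle inequality gives $ab^m = d(p_0,p_k) \le \sum d(p_{s},p_{s+1}) \le k\, a b^{m-1}$, which is harmless unless $k$ can be taken small; conversely, repeatedly substituting a two-step detour of total "length" at most $2ab^{m-1}$ for each step at level $m$ must terminate, and counting the levels yields a bound that contradicts $\Gamma(X)$ being discrete once we push $N$ large enough. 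The hard part will be organizing this substitution/iteration so that the accumulated distances stay inside the discrete set $\Gamma(X) = ab^{\ZZZ}$ and genuinely collapse the scale; I expect the cleanest route is to fix the minimal $m$ over all violating triples (minimal gap first, then minimal $m$), apply a dilation of factor $b^{-\text{gap}}$ to one sub-triangle to produce a new violating triple of strictly smaller gap or strictly smaller $m$, contradicting minimality.

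Alternatively — and this may be the slicker argument — one shows directly that the set $\Gamma(X) = ab^{\ZZZ}$ being \emph{multiplicatively} discrete is incompatible with the triangle inequality unless it is ultrametric: if $d(x,z) > \max(d(x,y),d(y,z))$ then $d(x,z) \ge b\cdot\max(d(x,y),d(y,z))$ (next value up), so $d(x,y)+d(y,z) \ge d(x,z) \ge b\max(d(x,y),d(y,z))$ forces $b \le 2$ as above; now iterate the configuration $n$ times to get points at distance $d(x,z)$ but joined by a path whose $2^n$ edges each have length $\le b^{-n}d(x,z)$ up to bounded error, giving $d(x,z) \le 2^n\cdot b^{-n}d(x,z)\cdot(1+o(1))$, harmless, so instead go the other way: build from $x$ to $z$ using the \emph{shortcut}, concluding $d(x,z) = \max$ after all. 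I expect the main obstacle is precisely making one of these iteration schemes terminate cleanly, and I would resolve it by the minimal-counterexample device described in the previous paragraph rather than an explicit limiting estimate.
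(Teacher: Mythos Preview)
Your proposal gestures at the right kind of argument---an extremal counterexample combined with the action of $\Lambda(X)$---but it never actually produces a contradiction. The path-chaining ideas you sketch lead, as you yourself note, only to inequalities like $ab^m \le k\,ab^{m-1}$ or $d(x,z)\le 2^n b^{-n} d(x,z)$, which are ``harmless''; and your ``minimal-counterexample device'' is left unspecified: you do not say which dilation you apply to the extremal triple, nor why the resulting triple has strictly smaller gap or $m$. In fact minimising is the wrong direction, since there is no lower barrier to push against.

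The paper's argument runs the other way. Normalise so that the smallest side of a violating triple equals $a$; among all such triples choose one whose \emph{largest} side $d(x,z)=ab^N$ is maximal (this maximum exists precisely by the bound $b^{N-1}\le 1/(b-1)$ you derived). Now take $\psi\in\Lambda(X)$ with $\psi(x)=x$, $\psi(y)=z$; then $|\psi|=b^N$, and with $w\df\psi(z)$ one has $d(x,w)=ab^{2N}$ and $d(z,w)=ab^{N+k}$. A short case analysis on $d(y,w)=ab^p$ (rescaling by $\phi_{-p}$ when $p<0$, and passing to the triple $(y,z,w)$ when $p=2N$) always yields a violating triple with smallest side $a$ and largest side strictly greater than $ab^N$, contradicting maximality. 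The missing idea in your proposal is exactly this specific choice of dilation---fix one vertex and send a short edge onto the long one---which \emph{amplifies} the violation rather than trying to shrink it.
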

\begin{proof}
Assume, on the contrary, that \(d\) is not an ultrametric. This means that there
are three distinct points \(x, y, z \in X\) such that \(d(x,y) \leq d(y,z) <
d(x,z)\). Because of \eqref{eqn:Gamma(X)-1} and since for each \(k \in \ZZZ\),
\(\Lambda(X)\) contains a dilation \(\phi_k\) such that \(|\phi_k| = b^k\), we
may and do assume that \(d(x,y) = a\). Express both \(d(y,z)\) and \(d(x,z)\) in
the form \(d(y,z) = ab^k\) and \(d(x,z) = ab^n\) where \(k, n \in \ZZZ\). Then
\(0 \leq k < n\) and (by the triangle inequality) \(b^n \leq 1+b^k \leq
1+b^{n-1}\), which implies that \(b^{n-1} \leq \frac{1}{b-1}\). So, among all
triples \((x,y,z) \in X^3\) such that \(a = d(x,y) \leq d(y,z) < d(x,z)\) there
is one such that \(d(x,z)\) is the largest possible. We fix such a `maximal'
triple \((x,y,z)\) and denote \(d(y,z) = ab^k\) and \(d(x,z) = ab^N\) (then \(0
\leq k < N\)). We conclude from \LEM{two-point} that there exists \(\psi \in
\Lambda(X)\) for which \(\psi(x) = x\) and \(\psi(y) = z\). Set \(w \df
\psi(z)\) and observe that \(|\psi| = b^N\) and therefore
\[d(x,w) = b^N d(x,z) = ab^{2N}, \qquad d(z,w) = b^N d(y,z) = ab^{N+k}.\]
Now denote \(d(y,w) = ab^p\) (where \(p \in \ZZZ\)) and notice that:
\begin{itemize}
\item if \(p > 2N\), then \(a = d(x,y) < ab^{2N} = d(x,w) < ab^p = d(y,w)\) and
 \(p > N\);
\item if \(p < 0\), then \(a = d(\phi_{-p}(y),\phi_{-p}(w)) < ab^{-p} =
 d(\phi_{-p}(y),\phi_{-p}(x)) < ab^{2N-p} = d(\phi_{-p}(w),\phi_{-p}(x))\) and
 \(2N-p > N\);
\item if \(0 \leq p < 2N\), then \(a = d(x,y) \leq d(y,w) < ab^{2N} = d(x,w)\)
 and \(2N > N\),
\end{itemize}
and each of the above cases contradicts the `maximality' of \(N\). So, we
conclude that \(p = 2N\). But then \(a = d(\phi_{-k}(y),\phi_{-k}(z)) < ab^N =
d(\phi_{-k}(z),\phi_{-k}(w)) < ab^{2N-k} = d(\phi_{-k}(y),\phi_{-k}(w))\) and
\(2N-k > N\), which again leads to a contradiction, and completes the proof.
\end{proof}

The following is a kind of folklore (see, e.g., \cite[Theorem~4.2]{m-s}).

\begin{thm}{ultram-ultrah}
Every metrically homogeneous ultremetric space is ultrahomogeneous; that is,
isometries between its arbitrary finite subsets extend to global isometries.
\end{thm}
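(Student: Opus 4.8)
The plan is to reduce the statement, by induction on the number of points of the finite subset, to a \emph{one-point extension} principle, and to prove that principle by an explicit ``ball-swapping'' construction that uses nothing beyond metric homogeneity and the ultrametric inequality.

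The principle I would isolate is: \emph{if $F \subseteq X$ is finite and $u, v \in X$ satisfy $d(u,p) = d(v,p)$ for every $p \in F$, then there is an isometry $h$ of $X$ fixing $F$ pointwise with $h(u) = v$.} Granting it, the theorem follows by induction on $n$, the number of points, the base cases ($n \le 1$) being immediate (for $n = 1$ one invokes metric homogeneity): given an isometry $f$ of $A = \{a_1, \ldots, a_{n+1}\}$ onto $B = \{b_1, \ldots, b_{n+1}\}$ with $f(a_i) = b_i$, the inductive hypothesis applied to $\{a_1, \ldots, a_n\}$ furnishes an isometry $g$ of $X$ with $g(a_i) = b_i$ for $i \le n$; then $u := g(a_{n+1})$ satisfies $d(u, b_i) = d(a_{n+1}, a_i) = d(b_{n+1}, b_i)$ for all $i \le n$, so applying the principle with $F = \{b_1, \ldots, b_n\}$ and $v = b_{n+1}$ yields an isometry $h$ of $X$ fixing $b_1, \ldots, b_n$ and sending $u$ to $b_{n+1}$, and then $h \circ g$ is a (global) isometry extending $f$.

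To prove the principle I may assume $u \ne v$ and put $s := d(u,v) > 0$. By the ultrametric inequality, $d(u,p) = d(v,p) = \max(d(u,p), d(p,v)) \ge d(u,v) = s$ for every $p \in F$, and the relation ``$d(x,y) < s$'' is an equivalence relation on $X$ whose classes are the open balls of radius $s$. Let $U$ and $V$ be the classes of $u$ and of $v$; then $U \ne V$ (as $d(u,v) = s$) and $F \cap (U \cup V) = \emptyset$ (as $d(u,p), d(v,p) \ge s$). Three facts, all immediate from the ultrametric inequality, are needed: (i) $d(x,y) = s$ whenever $x \in U$ and $y \in V$; (ii) $d(z,x) = d(z,u) = d(z,v)$ whenever $z \notin U \cup V$ and $x \in U \cup V$; and (iii) if $\phi$ is an isometry of $X$ with $\phi(u) = v$ (one exists by metric homogeneity), then $\phi(U) = V$. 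Now define $h \colon X \to X$ to agree with $\phi$ on $U$, with $\phi^{-1}$ on $V$, and with the identity on $X \setminus (U \cup V)$. By (iii), $h$ is a well-defined bijection of $X$ with $h(u) = v$ that fixes $F$ pointwise; and $h$ is an isometry, as one checks by cases according to the location of $x$ and of $y$ among $U$, $V$, $X \setminus (U \cup V)$, using (i) and (ii) --- for instance, if $x \in U$ and $y \notin U \cup V$, then $d(h(x), h(y)) = d(\phi(x), y) = d(v, y) = d(u, y) = d(x, y)$, the two middle equalities being instances of (ii).

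The point I expect to need the most care is fact (ii): that a point outside $U \cup V$ is equidistant from $u$ and from $v$ and moreover realises that common distance to \emph{every} point of $U \cup V$; this is exactly what makes the interchange of $U$ and $V$ compatible with leaving the complement of $U \cup V$ pointwise fixed. The remaining case checks are routine. It is perhaps worth emphasising that no recursion over the ``metric scales'' of $X$ is involved: working at the single scale $s = d(u,v)$ disposes of all of $F$ at once, because every point of $F$ is automatically at distance at least $s$ from both $u$ and $v$.
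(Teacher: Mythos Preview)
Your argument is correct. The paper itself does not prove this theorem at all: it merely records it as folklore and cites \cite[Theorem~4.2]{m-s}. So there is no ``paper's proof'' to compare against; what you have supplied is a clean, self-contained proof that the paper chose to outsource.

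Your ball-swapping construction is the standard way to see this, and you have organised it well. The key insight---that the hypothesis $d(u,p)=d(v,p)$ for all $p\in F$ forces every point of $F$ to lie outside both open $s$-balls $U$ and $V$, so that a single swap at scale $s=d(u,v)$ already fixes all of $F$---is exactly the right one, and your facts (i)--(iii) cover all the case checks needed. The one place a reader might pause is the mixed case $x\in U$, $y\in V$, where $h(x)=\phi(x)\in V$ and $h(y)=\phi^{-1}(y)\in U$ need not be related to each other via $\phi$; but fact (i) makes this harmless since \emph{any} pair from $U\times V$ is at distance exactly $s$. You have this right.

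One stylistic remark: in the inductive reduction, when $n=0$ the map $f$ is the empty function and the identity of $X$ extends it; when $n=1$ you invoke metric homogeneity directly. Both are fine, and the induction step you wrote is correct as stated.
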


The following is a direct consequence of the above two results and of
\LEM{two-point}.

\begin{cor}{ultra}
\(X\) is ultrahomogeneous.
\end{cor}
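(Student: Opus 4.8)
The plan is a one-step deduction from results already in hand. By the concluding assertion of \LEM{two-point}, the space \(X\) is metrically homogeneous; and by \LEM{ultram}, its metric \(d\) is an ultrametric. Hence \(X\) is a metrically homogeneous ultrametric space, so \THM{ultram-ultrah} applies to it verbatim and gives that every isometry between two arbitrary finite subsets of \(X\) extends to a global isometry of \(X\). That is exactly the assertion that \(X\) is ultrahomogeneous, so I would simply write the proof as a single sentence chaining the three references \LEM{two-point}, \LEM{ultram}, and \THM{ultram-ultrah}.

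There is no genuine obstacle: all the substantive work has already been done — the ultrametric inequality for \(X\) (proved above by the `maximal triple' argument) and the cited folklore fact \THM{ultram-ultrah}. The only point to be careful about when writing it out is to make explicit that `ultrahomogeneous' in \COR{ultra} is understood in the same sense as in \THM{ultram-ultrah}, namely extendability of isometries between \emph{finite} subsets, so that the implication is literally immediate. (If one later wants extension of isometries between arbitrary, not necessarily finite, subsets, that would be a separate matter and is not what is claimed here.)
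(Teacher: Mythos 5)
Your proposal is correct and matches the paper exactly: the paper states the corollary as ``a direct consequence of the above two results and of \LEM{two-point}'', i.e.\ precisely the chain \LEM{two-point} (metric homogeneity), \LEM{ultram} (ultrametricity), and \THM{ultram-ultrah}. Nothing further is needed.
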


For the purposes of the next results, we introduce the following auxiliary

\begin{dfn}{charact-1}
Let \((Z,\rho)\) be a (two-point dilation-homogeneous locally compact) space of
type 1, and let \(\Gamma(Z) = \{pq^k\dd\ k \in \ZZZ\}\) where \(1 \leq p < q\).
Further, let \(N\) denote the supremum of all sizes of the sets \(A \subset Z\)
such that \(\rho(x,y) = p\) for all \(x, y \in A\). Then \(N \in \NNN \setminus
\{0,1\}\), by \LEM{H-B}. We use \((N(Z),a(Z),b(Z))\) to denote the above three
numbers \((N,p,q)\).
\end{dfn}

\begin{thm}{isometric-1}
A space \(Z\) of type 1 is isometric to \(X\) iff
\[(N(Z),a(Z),b(Z)) = (N(X),a(X),b(X)).\]
\end{thm}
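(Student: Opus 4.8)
The ``only if'' direction is immediate, since \(a(\cdot)\), \(b(\cdot)\) and \(N(\cdot)\) are invariants of the isometry type. So assume \((N(Z),a(Z),b(Z)) = (N(X),a(X),b(X))\); write \(N \df N(X)\) and note that then \(\Gamma(Z) = \Gamma(X) = \{ab^k\dd\ k\in\ZZZ\}\). Everything established so far for \(X\) applies equally to \(Z\): by \LEM{two-point}, \LEM{H-B}, \LEM{ultram} and \LEM{group}, both \(X\) and \(Z\) are metrically homogeneous, Heine-Borel (hence complete and separable) ultrametric spaces whose dilation groups contain, for every \(k\in\ZZZ\), an element \(\phi_k\) of ratio \(b^k\). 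The plan is to produce a surjective isometry \(X\to Z\) by a back-and-forth construction between countable dense subsets, the engine of which is the \emph{one-point extension property}: whenever \(F\subset X\) is finite, \(x\in X\), and \(\psi\dd F\to Z\) is an isometric embedding, then \(\psi\) extends to an isometric embedding of \(F\cup\{x\}\) into \(Z\) (and symmetrically with the roles of \(X\) and \(Z\) reversed). Granting this, one fixes enumerations of countable dense subsets of \(X\) and of \(Z\), alternately forces the \(n\)-th listed point of each space into the domain and into the range of a growing finite partial isometry, and takes the union of the resulting chain; by completeness it extends uniquely to an isometry of \(X\) onto \(Z\).

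To obtain the one-point extension property I would first describe the ``distance type'' of a point over a finite set in our ultrametric setting. Given finite \(F\) and \(x\notin F\), pick \(f_0\in F\) minimising \(r\df d(x,f_0)\) over \(F\) and put \(F_0\df F\cap\bar{B}_X(f_0,r)\). The isosceles character of an ultrametric then forces \(d(x,f)=r\) for \(f\in F_0\) and \(d(x,f)=d(f_0,f)\) (which exceeds \(r\)) for \(f\in F\setminus F_0\); thus the whole distance vector of \(x\) over \(F\) is determined by \(f_0\), by \(r\in\Gamma(X)\), and by the mutual distances inside \(F\). Inside \(\bar{B}_X(f_0,r)\) the closed balls of radius \(r/b\) form a clopen partition, with ``\(y,y'\) in the same block'' meaning exactly \(d(y,y')\le r/b\); the point \(x\) must lie in a block containing \emph{no} point of \(F_0\) (else \(d(x,f)\le r/b<r=d(x,f_0)\), contradicting minimality), while \(F_0\) meets some number \(m\) of blocks, one of them \(f_0\)'s own. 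Realising this type in \(Z\) thus reduces to: find a block \(B^*\) of the radius-\(r/b\) partition of \(\bar{B}_Z(\psi(f_0),r)\) meeting none of the (again) \(m\) blocks met by \(\psi(F_0)\), and take \(z\in B^*\); the ultrametric identities then automatically give \(\rho(z,\psi(f))=d(x,f)\) for every \(f\in F\).

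So the whole argument hinges on one structural fact, which I expect to be the main point: \textbf{every closed ball of radius \(r\in\Gamma(X)\) is the disjoint union of exactly \(N\) closed balls of radius \(r/b\)}, and likewise in \(Z\). Granting it, \(m+1\le N\): choosing one point from each of the \(m\) blocks met by \(F_0\) together with \(x\) yields a set with all mutual distances equal to \(r\), and applying \(\phi_{-j}\) (where \(r=ab^j\)) turns it into a set with all mutual distances \(a\), whose cardinality is at most \(N=N(X)\); hence \(m<N\), so the free block \(B^*\) exists in \(Z\) as well. To prove the structural fact I would proceed in three steps: (i) use the dilation \(\phi_{-j}\) (with \(r=ab^j\)) to reduce to \(r=a\); (ii) use metric homogeneity to reduce to one fixed ball \(\bar{B}_X(x_0,a)\), which is compact, hence a \emph{finite} disjoint union of clopen radius-\(a/b\) balls, say \(\mu\) of them; (iii) observe that \(\mu\le N\) by picking one point per block, and \(\mu\ge N\) because a set of \(N\) points with all pairwise distances \(a\) (which exists, \(N\) being an attained finite supremum) lies in a single radius-\(a\) ball and meets \(N\) distinct radius-\(a/b\) blocks; hence \(\mu=N\). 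The identical argument in \(Z\) uses \(N(Z)=N(X)=N\).

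The main obstacle, then, is step (iii) — pinning the branching number to \(N\) uniformly over all balls and all scales — which is precisely where the combination of metric homogeneity and the scaling symmetry provided by the dilations \(\phi_k\) does the work. With that in hand, the one-point extension property, and then the back-and-forth, are routine.
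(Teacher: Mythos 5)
Your proof is correct, but it takes a genuinely different route from the paper's. The paper builds an explicit combinatorial model: the set \(\Omega(N)\) of admissible integer sequences carrying an explicit ultrametric \(D\), together with a coding bijection \(\Phi\dd X \to \Omega(N)\) constructed from a nested system of sets \(E(x,k)\) and balls \(B(x,k)\) rooted at a base point \(\theta\); both \(X\) and \(Z\) are then shown isometric to the common model \((\Omega(N),D)\), and the cardinalities of the \(E(x,k)\) are pinned down using ultrahomogeneity (\THM{ultram-ultrah} via \COR{ultra}) together with the dilations \(\phi_k\) fixing \(\theta\). You instead run a back-and-forth between countable dense subsets driven by a one-point extension property, and you correctly isolate the single structural input this requires: every closed ball of radius \(ab^{j}\) is the disjoint union of exactly \(N\) clopen balls of radius \(ab^{j-1}\). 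This is essentially the content of the paper's decomposition \eqref{eqn:aux45} (there stated separately for spheres about \(\theta\), which split into \(N-1\) blocks, and for balls, which split into \(N\)), and your proof of it is sound: compactness of balls (\LEM{H-B}) forces finitely many clopen blocks, one point per block gives \(\mu \leq N\), a maximal \(a\)-equidistant \(N\)-set gives \(\mu \geq N\), and dilations of ratio \(b^{k}\) (\LEM{group}) plus metric homogeneity (\LEM{two-point}) make the count independent of scale and location. Your isosceles analysis of the distance type of a point over a finite set, and the bound \(m+1 \leq N\) guaranteeing a free block in \(Z\), are likewise correct. What the paper's approach buys is an explicit normal form for type~1 spaces; what yours buys is a shorter, standard Fra\"{\i}ss\'{e}/Urysohn-style argument that bypasses \THM{ultram-ultrah} entirely, at the modest cost of invoking completeness and separability (both available from the Heine--Borel property) for the limit step.
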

\begin{proof}
The `only if' part is clear. To show the `if' part, we set \(N \df N(X) > 1\)
(recall that \(a = a(X)\) and \(b = b(X)\)), and denote by \(\Omega(N)\) the set
consisting of an element \(\varempty\) and of all sequences
\((\mu_n)_{n=0}^{\infty} \subset \ZZZ\) such that \(0 < \mu_1 < N\) and \(0 <
\mu_n \leq N\) for all \(n > 1\). Finally, for two distinct sequences \(\mu =
(\mu_n)_{n=0}^{\infty}\) and \(\nu = (\nu_n)_{n=0}^{\infty}\) from \(\Omega(N)\)
let \(\eta(\mu,\nu)\) stand for the least index \(k \geq 0\) such that \(\mu_k
\neq \nu_k\), and let
\[D(\mu,\nu) = D(\nu,\mu) \df \begin{cases}
0 & \UP{if } \mu = \nu\\
\mu_0 & \UP{if } \mu \neq \varempty = \nu\\
ab^{\max(\mu_0,\nu_0)} & \UP{if } \eta(\mu,\nu) = 0\\
ab^{\mu_0-\eta(\mu,\nu)+1} & \UP{otherwise}
\end{cases}\]
(\(D\) defines an ultrametric on \(\Omega(N)\), but we do not need this
property). We will show that \(X\) admits a bijection \(\Phi\dd X \to
\Omega(N)\) such that
\begin{equation}\label{eqn:aux44}
d(x,y) = D(\Phi(x),\Phi(y)) \qquad (x, y \in X)
\end{equation}
(which will finish the proof, by applying the same property to \(Z\)). To this
end, we fix an element \(\theta \in X\) and for each integer \(k \in \ZZZ\) we
fix a dilation \(\phi_k \in \Lambda(X)\) satisfying \(\phi_k(\theta) = \theta\)
and \(|\phi_k| = b^k\). Further, for each \(x \in X\) and \(\ell \in \ZZZ\) we
will use \(B(x,\ell)\) (resp. \(S(x,\ell)\)) to denote the set of all \(z \in
X\) such that \(d(x,z) \leq ab^{\ell}\) (resp. \(d(x,z) = ab^{\ell}\)). Further,
we fix a set \(A \subset S(\theta,0)\) for which \(d(x,y) = a\) for any \(x, y
\in A\) and whose size is the largest possible. It follows from both
ultrahomogeneity (cf. \COR{ultra}) and two-point dilation-homogeneity of \(X\),
and from the definition of \(N(X)\) that:
\begin{itemize}
\item \(A\) has \(N-1\) elements (as \(A \cup \{\theta\}\) is a maximal set of
 equidistal points);
\item for each \(k \in \ZZZ\), \(E(\theta,k) \df \phi_k(A)\) is a maximal subset
 of \(S(\theta,k)\) consisting of equidistal points whose distance is \(ab^k\).
\end{itemize}
Similarly, for any \(x \in S(\theta,\ell)\) (where \(\ell \in \ZZZ\)) and each
integer \(k < \ell\) we fix a set \(E(x,k) \subset B(x,k)\) of the largest
possible size such that \(d(u,v) = ab^k\) for all distinct \(u,v \in E(x,k)\).
As before, \(E(x,k)\) has exactly \(N\) elements (by two-point
dilation-homogeneity of \(X\) and since \(A \cup \{\theta\}\) has \(N\)
elements). Finally, express each of the above sets \(E(x,k)\) in the form
\(E(x,k) = \{b_{x,k}^1,\ldots,b_{x,k}^M\}\) where \(M = N\) if \(x \neq \theta\)
and \(M = N-1\) for \(x = \theta\). Since \(d\) is an ultrametric and
\(\Gamma(X)\) has a specific form, we conclude that for any \(k,\ell \in \ZZZ\):
\begin{equation}\label{eqn:aux45}\begin{cases}
S(\theta,\ell) = \bigsqcup_{q=1}^{N-1} B(b_{\theta,\ell}^q,\ell-1) &\\
B(x,k) = \bigsqcup_{q=1}^N B(b_{x,k}^q,k-1)& x \in S(\theta,\ell),\ k < \ell
\end{cases}\end{equation}
(where `\(\bigsqcup\)' means that the union consists of pairwise disjoint sets).
For any finite tuple \((\mu_0,\ldots,\mu_n) \in \ZZZ^{n+1}\) (where \(n \geq
0\)) satisfying \(0 < \mu_1 < N\) (provided that \(n > 0\)) and \(0 < \mu_k \leq
N\) whenever \(1 < k \leq N\), we define \(c(\mu_0,\ldots,\mu_n)\) by
a recursive rule: \(c(\mu_0) \df \theta\) and \(c(\mu_0,\ldots,\mu_{k+1}) \df
b_{c(\mu_0,\ldots,\mu_k),\mu_0-k}^{\mu_{k+1}}\).\par
Now we define \(\Phi\dd X \to \Omega(N)\) as follows: \(\Phi(\theta) \df
\varempty\) and for \(x \neq \theta\), \(\Phi(x)\) is a sequence
\((\mu_n(x))_{n=0}^{\infty}\) defined inductively as follows:
\begin{itemize}
\item \(\mu_0(x)\) is a unique integer such that \(x \in S(\theta,\mu_0(x))\);
\item \(\mu_1(x)\) is a unique integer from \(\{1,\ldots,N-1\}\) such that
 \[x \in B(c(\mu_0(x),\mu_1(x)),\mu_0(x)-1)\]
 (cf. \eqref{eqn:aux45});
\item for \(n > 1\), \(\mu_n(x)\) is a unique integer from \(\{1,\ldots,N\}\)
 such that
 \[x \in B(c(\mu_0(x),\ldots,\mu_n(x)),\mu_0(x)-n)\]
 (again, see \eqref{eqn:aux45}).
\end{itemize}
It follows from the above construction that \(x \in B(c(\mu_0(x),\ldots,
\mu_n(x)),\mu_0(x)-n)\) for any \(n \geq 0\). In particular, if \(\Phi(x) =
\Phi(y)\), then either \(x = y = \theta\) or both \(x\) and \(y\) are different
from \(\theta\) and \(\mu_n(x) = \mu_n(y)\) for all \(n\)---then \(x,y \in
B(c(\mu_0(x),\ldots,\mu_n(x)),\mu_0(x)-n)\) and thus \(d(x,y) \leq 2a
b^{\mu_0(x)-n} \to 0\ (n\to\infty)\), which yields that \(x = y\). This shows
that \(\Phi\) is one-to-one. On the other hand, if \((\mu_n)_{n=0}^{\infty}\) is
an arbitrary sequence from \(\Omega(N)\), then \(S(\theta,\mu_0)\) and all
the sets \(B(c(\mu_0,\ldots,\mu_n),\mu_0-n)\) (where \(n > 0\)) form
a decreasing sequence of compact non-empty subsets of \(X\) (thanks to
\LEM{H-B} and \eqref{eqn:aux45}). So, their intersection is non-empty, say it
contains \(x\). Then \(x \neq \theta\) and \(\Phi(x) = (\mu_n)_{n=0}^{\infty}\),
and \(\Phi\) is a bijection. It remains to check that \eqref{eqn:aux44} holds.
The only non-trivial cases are when \(x, y \in X\) are distinct and both differ
from \(\theta\). Set \(M \df \eta(\Phi(x),\Phi(y))\). First assume \(M = 0\).
Then \(d(x,\theta) \neq d(y,\theta)\) and hence \(d(x,y) = \max(d(x,\theta),
d(y,\theta))\) or, equivalently, \(d(x,y) = ab^{\max(\mu_0(x),\mu_0(y))} =
D(\Phi(x),\Phi(y))\). Finally, assume \(M > 0\). For simplicity, set \(c \df
c(\mu_0(x),\ldots,\mu_{M-1}(x))\), \(u \df c(\mu_0(x),\ldots,\mu_M(x))\) and \(v
\df c(\mu_0(y),\ldots,\mu_M(y))\), and observe that \(u \neq v\) and \(u, v \in
E(c,\mu_0(x)-M+1)\) and \(x, y \in B(c,\mu_0(x)-M+1))\). So, \(d(x,y) \leq
ab^{\mu_0(x)-M+1}\) and \(d(u,v) = ab^{\mu_0(x)-M+1}\). On the other hand, \(x
\in B(u,\mu_0(x)-M)\) and, similarly, \(y \in B(v,\mu_0(x)-M)\) and therefore
\(d(x,y) = ab^{\mu_0(x)-M+1} = D(\Phi(x),\Phi(y))\) (since \(d\) is
an ultrametric), and we are done.
\end{proof}

To establish the existence of type 1 spaces with arbitrarily preassigned
parameters \(N\), \(a\) and \(b\), we need the next to lemmas.

\begin{lem}{product-1}
Let \((Y,d_Y)\) and \((Z,d_Z)\) be two spaces of type 1 such that
\[(a(Y),b(Y)) = (a(Z),b(Z)).\]
Equip \(W \df Y \times Z\) with the maximum metric \(d_W\) induced by \(d_Y\)
and \(d_Z\); that is, \(d_W((x,z),(y,w)) \df \max(d_Y(x,y),d_Z(z,w))\). Then
\((W,d_W)\) is a (two-point dilation-homogeneous locally compact) space of type
1 as well such that
\[(N(W),a(W),b(W)) = (N(Y) \cdot N(Z),a(Y),b(Y)).\]
\end{lem}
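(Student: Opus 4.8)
Write \(a \df a(Y) = a(Z)\) and \(b \df b(Y) = b(Z)\). The plan is to establish, in this order: (i) \(d_W\) is an ultrametric and \((W,d_W)\) is Heine--Borel (in particular locally compact, with more than one point, and with a compact set of non-empty interior — e.g.\ any closed ball of positive radius); (ii) \(\Gamma(W) = \{ab^k\dd\ k\in\ZZZ\}\); (iii) \(W\) is metrically homogeneous, hence — being ultrametric — ultrahomogeneous, and for each \(k\in\ZZZ\) the group \(\Lambda(W)\) contains a dilation of ratio \(b^k\); (iv) \(W\) is two-point dilation-homogeneous; and finally (v) \(N(W) = N(Y)\cdot N(Z)\). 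Once (i)--(iv) are in hand, \(W\) satisfies the running assumptions on \((X,d)\), its distance set is infinite and not dense in \((0,\infty)\) so \(W\) is of type 1, and the uniqueness clause of \LEM{group} forces \((a(W),b(W)) = (a,b)\); together with (v) this is precisely the assertion.

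For (i)--(iii) I would argue as follows. By \LEM{ultram} applied to \(Y\) and to \(Z\), the metrics \(d_Y\) and \(d_Z\) are ultrametrics, and a one-line check gives the same for their maximum \(d_W\). By \LEM{H-B} the factors are Heine--Borel, and since \(\bar B_W((y,z),r) = \bar B_Y(y,r) \times \bar B_Z(z,r)\) is a product of compacta, \(W\) is Heine--Borel too. Every value of \(d_W\) is a maximum of two numbers from \(\{ab^k\}\cup\{0\}\), hence lies in \(\{ab^k\}\); conversely each \(ab^k\) is attained (take \(x\neq y\) in \(Y\) with \(d_Y(x,y)=ab^k\) and any \(z\in Z\)), so \(\Gamma(W) = \{ab^k\dd\ k\in\ZZZ\}\). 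For metric homogeneity: given \((y,z),(y',z')\in W\), use \LEM{two-point} to pick isometries \(u\) of \(Y\) and \(v\) of \(Z\) with \(u(y)=y'\) and \(v(z)=z'\); then \(u\times v\) is an isometry of \(W\) carrying the first point to the second. Being a metrically homogeneous ultrametric space, \(W\) is ultrahomogeneous by \THM{ultram-ultrah}. Finally, by \LEM{group} the ratio group of a type 1 space is \(\{b^k\dd\ k\in\ZZZ\}\), so for any \(k\) there are \(u\in\Lambda(Y)\) and \(v\in\Lambda(Z)\) with \(|u|=|v|=b^k\), and \(u\times v\in\Lambda(W)\) has ratio \(b^k\).

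The step I expect to be the crux is (iv): a bare product dilation \(u\times v\) is in general unavailable, because a dilation of \(W\) moving a prescribed pair of points to another prescribed pair forces a fixed ratio on each factor separately, and these two forced ratios need not agree. The remedy is to decouple the scaling from the isometric adjustment. Given three distinct points \(P,Q,R\) of \(W\), both \(d_W(P,Q)\) and \(d_W(P,R)\) lie in \(\{ab^k\}\), so \(d_W(P,R)/d_W(P,Q) = b^k\) for some \(k\); pick \(\lambda\in\Lambda(W)\) with \(|\lambda|=b^k\) from (iii), note that \(d_W(\lambda(P),\lambda(Q)) = d_W(P,R)\), and invoke ultrahomogeneity of \(W\) to get an isometry \(g\) of \(W\) with \(g(\lambda(P))=P\) and \(g(\lambda(Q))=R\). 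Then \(g\circ\lambda\in\Lambda(W)\) fixes \(P\) and sends \(Q\) to \(R\), which is two-point dilation-homogeneity.

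It remains to compute \(N(W)\). For the lower bound \(N(W)\ge N(Y)N(Z)\), take maximal equidistant sets \(A_Y\subset Y\) and \(A_Z\subset Z\) (all mutual distances \(a\)); then \(A_Y\times A_Z\) has \(N(Y)N(Z)\) elements, and any two distinct ones are at \(d_W\)-distance \(a\) (each coordinate distance is \(0\) or \(a\), at least one being \(a\)). For the reverse bound, let \(B\subset W\) have all mutual distances equal to \(a\), and partition \(B\) according to which ball \(\bar B_Y(\cdot,ab^{-1})\) of \(Y\) the first coordinate lies in (these balls partition \(Y\), as \(d_Y\) is an ultrametric). First coordinates of points of \(B\) in different cells are at \(d_Y\)-distance lying in \((ab^{-1},a]\), and the only value of \(\Gamma(Y)\) there is \(a\); in particular such first coordinates are distinct, so there are at most \(N(Y)\) cells. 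Within one cell, any two distinct points of \(B\) have first coordinates at \(d_Y\)-distance \(<a\), forcing their second coordinates to be at \(d_Z\)-distance \(a\) (hence distinct); so the second-coordinate map embeds the cell onto an equidistant subset of \(Z\), giving at most \(N(Z)\) points per cell. Hence \(|B|\le N(Y)N(Z)\), and equality \(N(W)=N(Y)N(Z)\) follows.
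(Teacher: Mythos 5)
Your proposal is correct and follows essentially the same route as the paper: check that \(d_W\) is an ultrametric with \(\Gamma(W)=\{ab^k\dd\ k\in\ZZZ\}\), deduce ultrahomogeneity of \(W\) from metric homogeneity via \THM{ultram-ultrah}, build dilations of ratio \(b^k\) as products \(u\times v\), obtain two-point dilation-homogeneity by composing such a scaling with an isometry supplied by ultrahomogeneity, and compute \(N(W)\) from products of maximal equidistant sets. The only difference is that you spell out the upper bound \(N(W)\le N(Y)N(Z)\) with an explicit partition argument where the paper merely asserts maximality of \(A\times B\); that is a welcome extra detail, not a change of method.
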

\begin{proof}
Set \((\alpha,\beta) \df (a(Y),b(Y))\). It is straightforward that \(W\) is
an ultrametric space such that \(\Gamma(W) = \{\alpha\beta^k\dd\ k \in \ZZZ\}\)
(this implies that \((a(W),b(W)) = (\alpha,\beta)\)). Further, being the product
of two metrically homogeneous spaces, \(W\) is metrically homogeneous as well.
So, we infer from \THM{ultram-ultrah} that \(W\) is ultrahomogeneous. We also
know that for any \(k \in \ZZZ\) there are dilations \(u_k \in \Lambda(Y)\) and
\(v_k \in \Lambda(Z)\) such that \(|u_k| = |v_k| = \beta^k\). Then it is readily
seen that also \(w_k\dd W \ni (y,z) \mapsto (u_k(y),v_k(z)) \in W\) is
a dilation from \(\Lambda(W)\) such that \(|w_k| = \beta^k\). Now fix three
distinct points \(\xi,\eta,\nu\) from \(W\) and choose \(p \in \ZZZ\) such that
\(d_W(\xi,\eta) = d_W(\xi,\nu) \cdot \beta^p\). Set \(\xi' \df w_p(\xi)\) and
\(\nu' \df w_p(\nu)\) and note that \(d_W(\xi,\eta) = d_W(\xi',\nu')\). So, we
infer from ultrahomogeneity of \(W\) that there exists a bijective isometry
\(q\dd W \to W\) that sends \(\xi\) to \(\xi'\) and \(\eta\) to \(\nu'\). Then
\(r \df w_p^{-1} \circ q \in \Lambda(W)\) satisfies \(r(\xi) = \xi\) and
\(r(\eta) = \nu\), and therefore \(W\) is two-point dilation-homogeneous.
Finally, if \(A \subset Y\) and \(B \subset Z\) are maximal sets of equidistal
points whose distance is \(\alpha\), then \(A \times B\) is a maximal subset of
\(W\) of equidistal points whose distance is \(\alpha\) as well, and therefore
\(N(W) = N(Y) \cdot N(Z)\).
\end{proof}

\begin{lem}{field}
For each finite field \(\FFF_q\) (of size \(q\)) and two reals \(\alpha\) and
\(\beta\) satisfying \(1 \leq \alpha < \beta\), the formula \(\rho(x,y) \df
ab^{-|x-y|}\) correctly defines a metric on \(Z \df \FFF_q((X))\) such that
the metric space \((Z,\rho)\) is two-point dilation-homogeneous and locally
compact, and satisfies \((N(Z),a(Z),b(Z)) = (q,\alpha,\beta)\).
\end{lem}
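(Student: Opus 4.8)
The plan is to verify in turn the items hidden in the statement: that $\rho$ is a well-defined ultrametric, that $(Z,\rho)$ is of type~$1$ with $(a(Z),b(Z))=(\alpha,\beta)$, that $Z$ is locally compact (indeed Heine-Borel), that $Z$ is two-point dilation-homogeneous, and that $N(Z)=q$. Throughout I would use the standard arithmetic of the valuation $v\mapsto|v|$ on $\FFF_q((X))$: $|vw|=|v|+|w|$; $|v+w|\geq\min(|v|,|w|)$, with equality whenever $|v|\neq|w|$; $|{-v}|=|v|$; $|v|=\infty$ exactly when $v=0$; and $\{|v|\dd\ v\neq0\}=\ZZZ$ (this last because $|X^{k}|=k$ for every $k\in\ZZZ$).

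First, $\rho$ is well defined --- here one uses the convention $\beta^{-\infty}\df0$, which gives $\rho(x,x)=\alpha\beta^{-|0|}=0$ --- and is an ultrametric: it is symmetric since $|{-v}|=|v|$; it vanishes only on the diagonal since $|x-y|=\infty$ forces $x=y$; and, since $t\mapsto\alpha\beta^{-t}$ is decreasing and $|x-z|\geq\min(|x-y|,|y-z|)$, one gets $\rho(x,z)\leq\max(\rho(x,y),\rho(y,z))$. Moreover $\Gamma(Z)=\{\alpha\beta^{-|v|}\dd\ v\neq0\}=\{\alpha\beta^{k}\dd\ k\in\ZZZ\}$, which is neither a singleton nor dense in $(0,\infty)$; so, once two-point dilation-homogeneity is known, \LEM{group} gives that $Z$ is of type~$1$ with $(a(Z),b(Z))=(\alpha,\beta)$.

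To see that $Z$ is locally compact I would show it is Heine-Borel. As $\rho$ is an ultrametric with $\Gamma(Z)=\{\alpha\beta^{k}\dd\ k\in\ZZZ\}$, every closed ball $\bar{B}_Z(v,r)$ with $r>0$ equals $v+\{w\dd\ |w|\geq k\}=v+X^{k}\FFF_q[[X]]$ for a suitable $k\in\ZZZ$ (depending on $r$); since additive translations are $\rho$-isometries, it suffices to note that $X^{k}\FFF_q[[X]]$, carrying the subspace topology, is homeomorphic to $\prod_{n\geq k}\FFF_q$ (identify a series with its coefficient sequence), hence compact by Tychonoff's theorem. Two-point dilation-homogeneity then follows from the action of the affine group on $Z$: multiplication by a fixed non-zero $c\in\FFF_q((X))$ is a bijection of $Z$ (here $\FFF_q((X))$ being a field is used) which, because $|cw|=|c|+|w|$, is a dilation of ratio $\beta^{-|c|}$, while every additive translation is a $\rho$-isometry. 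Given three distinct points $\xi,\eta,\nu$ of $Z$, put $c\df(\nu-\xi)(\eta-\xi)^{-1}$ --- non-zero since $\nu\neq\xi$ and $\eta\neq\xi$ --- and $u(w)\df c(w-\xi)+\xi$; then $u\in\Lambda(Z)$ (a composition of bijective dilations), $u(\xi)=\xi$, and $u(\eta)=c(\eta-\xi)+\xi=\nu$, as required.

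It remains to compute $N(Z)$. Since $a(Z)=\alpha$ and $\rho(x,y)=\alpha=\alpha\beta^{0}$ holds precisely when $|x-y|=0$ --- that is, exactly when the coefficients of $x$ and $y$ coincide at every index $<0$ and differ at index $0$ --- on any set $A\subset Z$ all of whose pairwise $\rho$-distances equal $\alpha$ the assignment ``coefficient at $X^{0}$'' is injective, whence $|A|\leq q$; conversely the set of constant series $\FFF_q\subset\FFF_q((X))$ is exactly such a set, of size $q$. Hence $N(Z)=q$, which together with $(a(Z),b(Z))=(\alpha,\beta)$ completes the proof. The argument is essentially routine: the only outside input is the compactness of $\FFF_q[[X]]$, and the only point demanding a little care is the valuation bookkeeping in the computation of $N(Z)$; neither is a genuine obstacle.
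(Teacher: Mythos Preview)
Your proof is correct and follows essentially the same approach as the paper's: the paper proves only the two non-routine items---two-point dilation-homogeneity via the affine maps $x\mapsto px+q$, and $N(Z)=q$ via the constant monomials $cX^0$ as a maximal equidistant set---and explicitly leaves everything else to the reader. You supply exactly those omitted details (the ultrametric verification, the computation of $\Gamma(Z)$, and the Heine--Borel property via $X^k\FFF_q[[X]]\cong\prod_{n\geq k}\FFF_q$), while your arguments for the two key points coincide with the paper's.
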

\begin{proof}
We will only show that \(Z\) is two-point dilation-homogeneous and \(N(Z) = q\)
(all other parts of the result are left to the reader). The former of these
properties is an immediate consequence of the fact that all functions of
the form \(Z \ni x \mapsto px+q \in Z\) are dilations (where \(p, q \in Z\) are
arbitrarily chosen); whereas the latter one is implied by the fact that all
constant monomials (that is, power series of the form \(c X^0\) where \(c \in
\FFF_q\)) form a maximal subset of \(Z\) consisting of equidistal points.
\end{proof}

\begin{rem}{field}
The argument used in the above proof to show that \(Z\) is two-point
dilation-homogeneous works for arbitrary fields \(\KKK\) equipped with
a multiplicative norm (that is, with a function \(|\cdot|\dd \KKK \to
[0,\infty)\) that vanishes only at the origin of \(\KKK\) and satisfies \(|xy| =
|x| \cdot |y|\) and \(|x+y| \leq |x|+|y|\) for all \(x, y \in \KKK\)). In
particular, the fields of \(p\)-adic numbers (and all other local fields) are
two-point dilation-homogeneous locally compact metric spaces.
\end{rem}

The next result summarizes all that have been already established in this
section and covers the case of type 1 spaces of \THM{main}.

\begin{thm}{type-1}
\begin{enumerate}[\upshape(A)]
\item Each space of the form \(F = F_{n,a,b}\) described in item \UP{(Type1)} of
 \THM{main} is a two-point dilation-homogeneous locally compact space of type~1
 such that \((N(F),a(F),b(F)) = (n,a,b)\).
\item Each two-point dilation-homogeneous locally compact space \(X\) of type~1
 is isometric to the space \(F_{n,a,b}\) with \((n,a,b) \df (N(X),a(X),b(X))\).
\end{enumerate}
\end{thm}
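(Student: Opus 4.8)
The plan is to derive both parts of the theorem by assembling results already established in this section; no essentially new argument is required, all the substance residing in \LEM{product-1}, \LEM{field} and \THM{isometric-1}.

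For part~(A), write the prime decomposition \(n = \prod_{s=1}^{N} p_s^{k_s}\) (so \(N \geq 1\) since \(n > 1\)). First I would apply \LEM{field}: for each \(s\) the space \(Z_s \df \FFF_{p_s^{k_s}}((X))\) of formal power series, equipped with the metric \(\rho_s(x,y) \df ab^{-|x-y|}\), is a two-point dilation-homogeneous locally compact space of type~1 satisfying \((N(Z_s),a(Z_s),b(Z_s)) = (p_s^{k_s},a,b)\). Then I would run an induction on \(s\) using \LEM{product-1}: each partial product \(Z_1 \times \cdots \times Z_s\), taken with the maximum metric, is again a two-point dilation-homogeneous locally compact space of type~1, its \((a,b)\)-parameter stays equal to \((a,b)\) (so the hypothesis of \LEM{product-1} keeps being met at the next step), and its \(N\)-parameter is the product of those of the factors. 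After \(N\) steps this produces \(W \df \prod_{s=1}^{N} Z_s\) with \((N(W),a(W),b(W)) = (\prod_s p_s^{k_s},\,a,\,b) = (n,a,b)\). It only remains to note that the maximum metric of \(W\) is literally the metric \(\rho\) appearing in item \UP{(Type1)} of \THM{main}: since \(b > 1\), the function \(t \mapsto b^{-t}\) is decreasing, hence \(\max_s ab^{-|x_s-y_s|} = ab^{-\min_s |x_s-y_s|}\). Thus \(W = F_{n,a,b}\), which proves part~(A). (Being locally compact and ultrametric, \(W\) contains a compact closed ball of small enough radius about any of its points, so it indeed meets the standing assumptions of the paper.)

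For part~(B), let \(X\) be the (type~1, two-point dilation-homogeneous, locally compact) space under consideration and put \((n,a,b) \df (N(X),a(X),b(X))\). By part~(A) the space \(F \df F_{n,a,b}\) is of type~1 and satisfies \((N(F),a(F),b(F)) = (n,a,b) = (N(X),a(X),b(X))\). Since the two triples of invariants coincide, \THM{isometric-1} immediately yields that \(F\) is isometric to \(X\), which is precisely the assertion of part~(B).

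I do not anticipate any real obstacle: every tool has already been proved, and the only points calling for a little care are the elementary bookkeeping of the invariants \((N,a,b)\) under iterated products and the verification that the metric in the description of \(F_{n,a,b}\) coincides with the iterated maximum metric.
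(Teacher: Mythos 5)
Your proposal is correct and takes exactly the route the paper intends: its proof of this theorem consists of the single instruction to combine \THM{isometric-1} with Lemmas~\ref{lem:product-1} and \ref{lem:field}, leaving the details to the reader, and your write-up supplies precisely those details (the induction over the prime-power factors, the bookkeeping of the invariants \((N,a,b)\), and the observation that the iterated maximum metric equals \(ab^{-\min_s|x_s-y_s|}\) because \(t\mapsto b^{-t}\) is decreasing).
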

\begin{proof}
Just apply \THM{isometric-1} and Lemmas~\ref{lem:product-1} and \ref{lem:field}.
The details are left to the reader.
\end{proof}

\section{Type 2 spaces}

The main tool of this section is the following result due to Tits \cite{tit}.
For its modern proof, see also \cite[Theorem~5.14]{kra}.

\begin{thm}{tits}
Let \(G\) be a transformation group of a locally compact Hausdorff space \(X\)
that acts two-transitively on \(X\), and let \(\theta \in X\). If \(G\) is both
locally compact and \(\sigma\)-compact, and \(X\) is neither compact nor totally
disconnected, then \(G\) is the semidirect product of \(G_{\theta}\) and \(V\)
where:
\begin{enumerate}[\upshape(Pr1)]
\item \(G_{\theta}\) is the isotropy group at \(\theta\); that is,
 \(G_{\theta}\) consists of all \(h \in G\) such that  \(h(\theta) = \theta\);
\item \(V\) is a closed normal subgroup of \(G\), isomorphic to a vector group
 \(\RRR^n\) for some \(n > 0\);
\item \(H\dd V \ni g \mapsto g(\theta) \in X\) is a homeomorphism;
\item after transforming the vector space structure of \(V\) to \(X\) via \(H\),
 \(G_{\theta}\) consists of linear operators on \(X\) and acts transitively on
 \(X \setminus \{\theta\}\).
\end{enumerate}
\end{thm}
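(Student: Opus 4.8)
The plan is to prove \THM{tits} in the spirit of Tits' classification, in three stages: reduce the space \(X\) to a Euclidean manifold (so that \(G\) becomes a Lie group), extract a normal vector subgroup \(V\) acting simply transitively, and then read off the structure of \(G_\theta\).

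First I would note that two-transitivity forces transitivity, so the orbit map \(G \ni g \mapsto g(\theta) \in X\) descends to a continuous bijection \(G/G_\theta \to X\); since \(G\) is locally compact and \(\sigma\)-compact and \(X\) is locally compact Hausdorff, a Baire-category/open-mapping argument shows this bijection is a homeomorphism. Next, \(X\) is connected: if some connected component of \(X\) contained two distinct points, then by two-transitivity \emph{every} pair of distinct points would lie in a common component, so \(X\) would be connected; the only alternative is that \(X\) is totally disconnected, which the hypotheses exclude. In particular the identity component \(G^{\circ}\) is already transitive, as its orbit through \(\theta\) is open and closed in the connected \(X\), so we are effectively in an almost-connected situation.

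The core --- and the step I expect to be the main obstacle --- is the passage to Lie groups. One must establish that \(X\) is finite-dimensional and locally connected, so that the Montgomery--Zippin--Gleason solution of Hilbert's fifth problem for transformation groups applies and makes \(G\) a Lie group and \(X\) a connected analytic manifold with \(G\) acting analytically and transitively. Two-transitivity is what rules out pathological (in particular infinite-dimensional) local behaviour: \(G_\theta\) acts transitively on the punctured space \(X \setminus \{\theta\}\), and this homogeneity, together with local compactness, is what bounds the local dimension. Once \(X\) is known to be a manifold, \(X \setminus \{\theta\}\) is \(G_\theta\)-homogeneous, and analysing the isotropy representation of \(G_\theta\) on the tangent space \(T_\theta X\) (and on the rays therein), together with the non-compactness of \(X\), shows that \(X\) is contractible, hence diffeomorphic to \(\RRR^n\). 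Carrying out this reduction rigorously is genuinely the technical heart of the theorem and cannot be short-circuited.

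Finally I would extract \(V\). Two-transitivity makes the action primitive, so \(G_\theta\) is a maximal closed subgroup and the orbits of any closed normal subgroup form a system of blocks; primitivity then forces every non-trivial normal subgroup of \(G\) to act transitively on \(X\) (it cannot act trivially, since \(G\) is a transformation group). Pick a minimal non-trivial closed connected normal subgroup \(V \trianglelefteq G\); it is topologically characteristically simple and, being non-compact (it is transitive on the non-compact \(X\)), is either a vector group \(\RRR^n\) or a non-compact semisimple group. The semisimple case is to be excluded: such a \(V\), transitive on the contractible \(X\), would exhibit \(X = V/V_\theta\) with \(V_\theta\) a maximal compact subgroup of \(V\) and \(X\) a Riemannian symmetric space of non-compact type; then \(V\) preserves that metric up to scale, and since \(G\) differs from \(V\) only through \(C_G(V)\) and \(\mathrm{Out}(V)\) (which one checks to be trivial and finite, respectively), \(G\) would preserve the associated distance classes and fail to be two-transitive. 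Hence \(V \cong \RRR^n\); being connected abelian of dimension \(\dim X\), its transitive action is simply transitive. Therefore \(H\colon V \ni g \mapsto g(\theta) \in X\) is a homeomorphism, \(V \cap G_\theta = \{e\}\), and \(G = V \rtimes G_\theta\) with \(V\) closed and normal, which gives (Pr1)--(Pr3). Transporting the group law of \(V\) to \(X\) through \(H\) makes \(X\) a vector group on which \(V\) acts by translations; \(G_\theta\) fixes the origin and normalises \(V\), hence acts by automorphisms of this vector group, i.e.\ by linear operators, and it is transitive on \(X \setminus \{\theta\} = \RRR^n \setminus \{0\}\) exactly because \(G\) is two-transitive. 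This yields (Pr4) and completes the plan.
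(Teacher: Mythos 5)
The paper does not actually prove this statement: it is imported as Tits' theorem, with \cite{tit} and \cite[Theorem~5.14]{kra} cited for the proof, so there is no internal argument to compare yours against. Judged on its own, your proposal is a reasonable roadmap through the known proof (orbit map a homeomorphism, connectedness from two-transitivity, reduction to Lie groups, extraction of a minimal normal subgroup, exclusion of the semisimple case), but it is a roadmap rather than a proof, and you concede as much at the decisive point.

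The first genuine gap is the one you flag yourself: the passage from ``locally compact, \(\sigma\)-compact, two-transitive'' to ``\(G\) is a Lie group and \(X\) is a manifold diffeomorphic to \(\RRR^n\)''. Nothing in the sketch verifies the hypotheses of the Montgomery--Zippin machinery (finite-dimensionality and local connectedness of \(X\), or a no-small-subgroups property of \(G\)); the remark that two-transitivity ``bounds the local dimension'' is an assertion, not an argument, and the subsequent claim that the isotropy representation forces \(X\) to be contractible is likewise unsubstantiated. The second gap is the exclusion of a non-compact semisimple transitive normal subgroup \(V\): you assert that \(V_\theta\) would be maximal compact, that \(C_G(V)\) is trivial and \(\mathrm{Out}(V)\) finite, and that \(G\) would then preserve the distance classes of the symmetric metric and so fail to be two-transitive --- each of these needs proof, and together they constitute essentially the whole content of the Tits--Kramer classification of two-transitive Lie groups. (The remaining steps --- two-transitive implies primitive, orbits of a normal subgroup form blocks, an effective transitive action of an abelian group is simply transitive, and the semidirect decomposition once \(V\cong\RRR^n\) is in hand --- are correct and standard.) So the architecture matches the literature, but the theorem is quoted in the paper precisely because these two steps are deep; as written, the proposal does not close them.
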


We recall that a transformation group \(G\) of a space \(X\) acts
two-transitivily on \(X\) if for any two pairs \((a,b), (p,q) \in X \times X\)
such that \(a \neq b\) and \(p \neq q\) there exists \(g \in G\) satisfying
\(g(a) = p\) and \(g(b) = q\).\par
With the aid of the above result, we will give a full classification of type 2
spaces. To this end, we fix such a space \((X,d)\). It follows from \LEM{H-B}
that \(X\) is non-compact.

\begin{lem}{Lamb-non-tot-disc}
\(\Lambda(X)\) is locally compact, \(\sigma\)-compact and not totally
disconnected.
\end{lem}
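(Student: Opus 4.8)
The plan is to treat the three assertions separately. Local compactness and $\sigma$-compactness are immediate: a type~2 space is non-degenerate, hence Heine-Borel by \LEM{H-B}, so \LEM{loc-comp} applied to $(X,d)$ shows at once that $\Lambda(X)=\Lambda(X,d)$, equipped with the pointwise convergence topology, is a (metrisable) locally compact, $\sigma$-compact topological group. In fact \LEM{loc-comp} supplies the explicit exhaustion $\Lambda(X)=\bigcup_{n=1}^{\infty}U(a,b,n)$ by compact sets, which I will reuse below.

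The substantive point is that $\Lambda(X)$ is not totally disconnected, and here I would argue by contradiction, exploiting the modulus homomorphism. Recall from \LEM{loc-comp} (see also \LEM{homo-R}) that $\pi\dd\Lambda(X)\ni u\mapsto|u|\in((0,\infty),\cdot)$ is a continuous group homomorphism. It is moreover \emph{surjective}: since $X$ is type~2, \LEM{H-B} gives $\Gamma(X)=(0,\infty)$, so, fixing any $c\in\Gamma(X)$ and writing $V=\{|u|\dd\ u\in\Lambda(X)\}$, \LEM{group} yields $(0,\infty)=\Gamma(X)=cV$, whence $V=(0,\infty)$.

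Now assume, for contradiction, that $\Lambda(X)$ is totally disconnected. Being a totally disconnected locally compact group, it then contains a compact open subgroup $U$ (van Dantzig's theorem). On the one hand, $\pi(U)$ is a compact subgroup of $((0,\infty),\cdot)$, and the only such subgroup is $\{1\}$; hence $U\subseteq\ker\pi$, so $\pi$ is constant on each left coset of $U$. On the other hand, each of the compact sets $U(a,b,n)$ from \LEM{loc-comp} meets only finitely many cosets of the open subgroup $U$ (these cosets form a disjoint open cover), so $\Lambda(X)$ is a union of countably many left cosets of $U$. Combining the two observations, $(0,\infty)=\pi(\Lambda(X))$ would be countable — a contradiction. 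Hence $\Lambda(X)$ is not totally disconnected.

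I expect the only delicate ingredient to be the appeal to van Dantzig's structure theorem (every totally disconnected locally compact group has a base at the identity of compact open subgroups); the rest is bookkeeping with facts already established. An alternative route — directly producing a non-trivial connected subset of $\Lambda(X)$, say a one-parameter family of dilations fixing a chosen point with prescribed moduli — founders on the non-canonicity of such dilations, so the homomorphism-plus-cardinality argument above seems the cleanest.
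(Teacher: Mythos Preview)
Your proof is correct and follows essentially the same route as the paper's: both obtain local compactness and $\sigma$-compactness from \LEM{H-B} and \LEM{loc-comp}, note that the modulus homomorphism surjects onto $(0,\infty)$ via \LEM{group} and \LEM{H-B}, and then derive a contradiction from van Dantzig's theorem by observing that a compact open subgroup would have countable index and trivial image, forcing $(0,\infty)$ to be countable. The only cosmetic difference is that the paper also invokes the Open Mapping Theorem (which would give an alternative immediate contradiction, since the image of the compact open subgroup would then be open in the connected group $(0,\infty)$), though the counting argument you give---and which the paper also spells out---does not actually require it.
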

\begin{proof}
It follows from Lemmas~\ref{lem:loc-comp}, \ref{lem:H-B}, \ref{lem:group} and
\ref{lem:homo-R} that \(\Lambda(X)\) is locally compact and \(\sigma\)-compact,
and that \(m\dd \Lambda(X) \ni u \mapsto |u| \in (0,\infty)\) is a continuous
surjective homomorphism. Assume, on the contrary, that \(\Lambda(X)\) is totally
disconnected. We conclude from the Open Mapping Theorem (for homomomorphisms
between locally compact \(\sigma\)-compact groups) that \(m\) is open, and from
the classical van Dantzig theorem \cite{v-d} that \(\lambda(X)\) contains
a subgroup \(K\) that is both open and compact. Then \(m(K) = \{1\}\) and \(K\)
has (at most) countable index in \(\Lambda(X)\). Consequently, \(m(K)\) is
countable, which is impossible.
\end{proof}

\begin{cor}{X-not-tot-disc}
The space \(X\) is not totally disconnected.
\end{cor}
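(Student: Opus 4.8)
The plan is to push the non-total-disconnectedness of the dilation group down to the space itself, by evaluating dilations at a single well-chosen point.

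First I would invoke \LEM{Lamb-non-tot-disc}, which tells us that \(\Lambda(X)\) (with the pointwise convergence topology) is \emph{not} totally disconnected. Since \(\Lambda(X)\) is a topological group, the connected components are the cosets of the connected component \(C\) of the identity; so `\(\Lambda(X)\) not totally disconnected' is equivalent to \(C \neq \{\mathrm{id}_X\}\). Pick any dilation \(g \in C\) with \(g \neq \mathrm{id}_X\). Because the action of \(\Lambda(X)\) on \(X\) is faithful (a dilation fixing every point of \(X\) is the identity), there is a point \(\theta \in X\) with \(g(\theta) \neq \theta\).

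Now fix that \(\theta\) and consider the orbit map \(\pi_\theta\dd \Lambda(X) \to X\), \(\pi_\theta(u) \df u(\theta)\). By the very definition of the topology on \(\Lambda(X)\) (pointwise convergence), \(\pi_\theta\) is continuous: it is the restriction to \(\Lambda(X)\) of the evaluation map \(X^X \ni f \mapsto f(\theta) \in X\), which is continuous for the product topology. Hence \(\pi_\theta(C)\) is a connected subset of \(X\), and it contains the two distinct points \(\theta = \pi_\theta(\mathrm{id}_X)\) and \(g(\theta) = \pi_\theta(g)\). Therefore \(X\) contains a connected set with more than one point, i.e.\ \(X\) is not totally disconnected.

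I do not expect a real obstacle here; the one point deserving a word is the existence of the witness \(\theta\), which rests precisely on the (obvious) faithfulness of the dilation action on \(X\). One could instead argue through the normality of \(\Lambda(X)^0\) together with the transitivity of \(\Lambda(X)\) on \(X\) furnished by \LEM{two-point}, but the faithfulness argument above is the shortest.
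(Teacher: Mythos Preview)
Your proof is correct and essentially identical to the paper's: the paper also takes the identity component \(S\) of \(\Lambda(X)\), picks \(h \in S\) and \(x \in X\) with \(h(x) \neq x\), and observes that \(\{g(x)\dd\ g \in S\}\) is a connected subset of \(X\) with more than one point. Your added remarks on faithfulness and continuity of the evaluation map just make explicit what the paper leaves implicit.
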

\begin{proof}
Let \(S\) stand for the connected component of \(\Lambda(X)\) at the identity.
It follows from \LEM{Lamb-non-tot-disc} that \(S\) is non-trivial. So, there is
\(x \in X\) and \(h \in S\) for which \(h(x) \neq x\). Then the set \(\{g(x)\dd\
g \in S\}\) is connected and contains at least two points.
\end{proof}

The above two results imply that all the assumptions of \THM{tits} are
satisfied. So, after fixing \(\theta \in X\), we may endow \(X\) with
a finite-dimensional vector space structure so that \(\theta\) is the origin of
\(X\), the topology of \((X,d)\) coincides with the vector space topology, all
translations in \(X\) are dilations, and each dilation on \(X\) is of the form
\(A \circ T_a\) where \(A\) belongs to a certain subgroup \(L\) of \(GL(X)\) and
\(T_a\) is the translation by \(a\) (that is, \(T_a(x) = x+a\)). In all
the results that follow we continue these arrangements.

\begin{lem}{invariant}
The metric \(d\) is invariant on the group \(X\).
\end{lem}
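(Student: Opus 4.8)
The plan is to show that every translation $T_a$ in $X$ is an \emph{isometry}, not merely a dilation; once this is known, invariance is immediate, since $d(x+z,y+z) = d(T_z(x),T_z(y)) = d(x,y)$ for all $x,y,z \in X$. By the structure obtained from \THM{tits}, each translation $T_a$ lies in $\Lambda(X)$, so it has a well-defined ratio $|T_a| > 0$ by \LEM{homo-R}, and by that same lemma $a \mapsto |T_a|$ restricted to the vector group $V \cong \RRR^n$ (with which $X$ is identified, $T_a$ corresponding to $a \in V$) is a continuous group homomorphism from $(\RRR^n,+)$ into $((0,\infty),\cdot)$. The key point is that such a homomorphism, when it takes only the values it can take here, must be trivial.

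First I would record that $|T_a|$ depends only on $a$ through this homomorphism, so $|T_{na}| = |T_a|^n$ for every integer $n$ and, more generally, $a \mapsto \log|T_a|$ is an additive (hence $\RRR$-linear, by continuity) functional $\ell$ on $\RRR^n$. Thus $|T_a| = e^{\ell(a)}$ for some linear $\ell$. If $\ell \ne 0$, pick $a$ with $\ell(a) > 0$; then $d(0, na) = |T_{(n-1)a}| \cdot d(0,a) + \text{(nothing forces collapse)}$ — more carefully, iterating, $d(0,na) \le \sum_{k=0}^{n-1} d(ka,(k{+}1)a) = \sum_{k=0}^{n-1} |T_{ka}|\, d(0,a) = d(0,a)\sum_{k=0}^{n-1} e^{k\ell(a)}$, which grows like $e^{(n-1)\ell(a)}$; but also $d(0,na) = |T_{na/2}|^{?}$... the cleaner route: the translation $T_{a}$ with $|T_a| = \lambda$ satisfies $d(0, 2^m a) = d(T_{2^{m-1}a}(0), T_{2^{m-1}a}(2^{m-1}a)) = \lambda^{2^{m-1}} d(0, 2^{m-1}a)$ only if we track a single orbit; instead use that $d(0, a) = d(T_{-a}(a), T_{-a}(0)\,)\cdot$ hmm. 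The decisive and simplest contradiction: the subgroup $V$ is \emph{closed} and normal in $\Lambda(X)$, and the homomorphism $m\dd \Lambda(X) \to (0,\infty)$, $u \mapsto |u|$, restricted to $V$ is $a \mapsto e^{\ell(a)}$; but $V$ also acts simply transitively on $X$, so for the fixed point $\theta = 0$ we have $d(0, a) = d(T_a(\theta), \theta)$ while also $d(0,a) = d(T_b(0), T_b(a)) = |T_b|\,d(0,a)$ is forced to equal $d(0, a)$ for the choice... Let me instead give the genuinely clean argument below.

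The clean argument: because $X$ is metrically homogeneous (\LEM{two-point}) and Heine-Borel (\LEM{H-B}), closed balls $\bar B(\theta, R)$ are compact, hence $\diam \bar B(\theta,R) < \infty$ for every $R$. Now if some translation had $|T_a| = \lambda \neq 1$, say $\lambda > 1$ (replacing $a$ by $-a$ if necessary, since $|T_{-a}| = |T_a|^{-1}$), consider the points $\theta, a, 2a, 3a, \ldots$. We have $d(\theta, na) \le \sum_{k=0}^{n-1} d(ka, (k{+}1)a)$, and $d(ka,(k{+}1)a) = d(T_{ka}(\theta), T_{ka}(a)) = |T_{ka}|\, d(\theta,a) = \lambda^{k} d(\theta,a)$; but this gives an \emph{upper} bound, not a lower one, so I instead bound from below: $\lambda^{n} d(\theta, a) = d(T_{na}(\theta), T_{na}(a)) = d(na, (n{+}1)a) \le d(na, \theta) + d(\theta, (n{+}1)a)$, hence $d(\theta, na) \ge \tfrac12(\lambda^{n} d(\theta,a) - d(\theta, (n{+}1)a))$, which combined with the additive upper bound $d(\theta,(n{+}1)a) \le d(\theta,a)\sum_{k=0}^{n}\lambda^k = d(\theta,a)\tfrac{\lambda^{n+1}-1}{\lambda-1}$ still leaves room; so the truly clean move is: the map $a \mapsto \log|T_a|$ is a nonzero continuous homomorphism $\RRR^n \to \RRR$, so its kernel $V_0$ is a closed hyperplane, $V/V_0 \cong \RRR$, and $V_0$ is a closed subgroup of $\Lambda(X)$ consisting of \emph{isometries} of $X$ acting with dense orbits is false — $V_0$ acts on the affine hyperplane $\theta + V_0$. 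On that hyperplane $V_0$ acts transitively by isometries, so $\theta + V_0$ has finite diameter iff it is bounded; but $V_0 \cong \RRR^{n-1}$ (if $n \ge 2$) acts simply transitively, and isometric images of $\theta$ under $V_0$ fill an unbounded set when $n\ge 2$, forcing $\operatorname{diam}(\theta+V_0) = \infty$, yet $\theta + V_0$ closed and acted on transitively by isometries is consistent with infinite diameter — so no contradiction from $V_0$ alone. Therefore the real obstacle, and the step I expect to be hardest, is ruling out a nontrivial $\ell$; I would do it by the scaling/Heine-Borel argument: choose $a$ with $|T_a| = \lambda > 1$; then the isometry orbit $\{T_b(a) : b \in V_0\}$ where $V_0 = \ker\ell$ all have the same distance $d(\theta, \cdot)$ from $\theta$ equal to $d(\theta, a)$ — wait, that needs $T_b(\theta) = \theta$, false. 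Concretely: $d(\theta, a + b) = d(T_b(\theta), T_b(a)) = |T_b| d(\theta, a) = d(\theta,a)$ for all $b \in V_0$; thus the whole affine hyperplane $a + V_0$ lies on the sphere $S(\theta, d(\theta,a))$, which is compact (Heine-Borel), contradicting that $a + V_0$ is an unbounded closed set when $n \ge 2$. For $n = 1$ there is no $V_0$-direction, but then $\ell \ne 0$ on $\RRR^1$ means every nonzero translation scales; pick $a$ with $\lambda = |T_a| > 1$, and note $d(\theta, -a) = d(T_a(-a), T_a(\theta)) \cdot \lambda^{-1}$ gives $d(\theta, -a) = \lambda^{-1} d(\theta, \theta)=0$, absurd. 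Hence $\ell \equiv 0$, every translation is an isometry, and $d$ is invariant on $X$.

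To summarize the actual write-up I would produce: (i) invoke \THM{tits} and \LEM{homo-R} to get that $a \mapsto \log|T_a|$ is a continuous additive, hence linear, functional $\ell$ on $X \cong \RRR^n$; (ii) assume $\ell \neq 0$ and derive a contradiction — for $n \geq 2$ by placing an unbounded affine hyperplane inside a compact sphere (using Heine-Borel from \LEM{H-B}), and for $n = 1$ by the one-line computation $d(\theta,-a) = \lambda^{-1}d(\theta,\theta) = 0$ after observing $T_a(-a) = \theta$, $T_a(\theta) = a$, wait $T_a(\theta) = \theta + a = a$ and $T_a(-a) = 0 = \theta$, so $d(a,\theta) = \lambda d(-a,\theta)$ and symmetrically (using $T_{-a}$) $d(-a,\theta) = \lambda d(a,\theta)$, giving $d(a,\theta) = \lambda^2 d(a,\theta)$, whence $\lambda = 1$; (iii) conclude every $T_a$ is an isometry, so $d(x+z, y+z) = d(x,y)$, i.e., $d$ is invariant on $(X,+)$. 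The main obstacle is organizing step (ii) cleanly; the symmetry argument $d(a,\theta) = \lambda^2 d(a,\theta)$ in fact works verbatim for every $n$ once one has a single $a$ with $|T_a| \neq 1$, so the hyperplane/Heine-Borel detour is unnecessary and I would present only that short symmetric computation.
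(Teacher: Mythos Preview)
Your final ``clean'' argument contains a genuine error. You compute correctly that
\(d(a,\theta) = d(T_a(\theta),T_a(-a)) = |T_a|\,d(\theta,-a) = \lambda\,d(-a,\theta)\).
But the ``symmetric'' step using \(T_{-a}\) does \emph{not} give \(d(-a,\theta) = \lambda\,d(a,\theta)\): since \(a \mapsto |T_a|\) is a group homomorphism, \(|T_{-a}| = |T_a|^{-1} = \lambda^{-1}\), so the same computation yields \(d(-a,\theta) = \lambda^{-1} d(a,\theta)\). Combining the two equations gives only the tautology \(d(a,\theta) = d(a,\theta)\), not \(d(a,\theta) = \lambda^2 d(a,\theta)\). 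Your earlier hyperplane attempt has a similar slip: you write \(d(\theta,a+b) = d(T_b(\theta),T_b(a))\), but \(T_b(\theta) = b\), not \(\theta\); what you actually get is \(d(b,a+b) = d(\theta,a)\), which says nothing about \(d(\theta,a+b)\).

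The missing idea is precisely the one ingredient you never invoke: two-point dilation-homogeneity gives you, for each nonzero \(e\), a dilation \(A\) fixing \(\theta\) with \(A(e) = -e\); by (Pr4) this \(A\) lies in \(L \subset GL(X)\), and one checks \(A T_e A^{-1} = T_{Ae} = T_{-e}\). Taking moduli, \(|T_{-e}| = |A|\,|T_e|\,|A|^{-1} = |T_e|\); combined with \(|T_{-e}| = |T_e|^{-1}\) this forces \(|T_e| = 1\). (Equivalently, as the paper writes it, \(A T_e^{-1} A^{-1} T_e = T_{2e}\) is a commutator, hence has modulus \(1\).) Your homomorphism framework is fine, but the contradiction requires this conjugation step tying \(T_e\) to \(T_{-e}\) through an element of \(L\); the translations alone cannot see it.
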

\begin{proof}
We only need to show that each translation is an isometry. To this end, first
note that \(A \circ T_x \circ A^{-1} = T_{Ax}\) for any \(A \in L\) and each
\(x \in X\). Now fix non-zero \(e \in X\) and choose \(A \in \Lambda(X)\) that
fixes \(\theta\) and sends \(e\) onto \(-e\). Then necessarily \(A \in L\) and
\(A T_e^{-1} A^{-1} T_e = T_{2e}\). We conclude from \LEM{homo-R} that
\(|T_{2e}| = 1\), and we are done.
\end{proof}

\begin{lem}{R-n-p}
Let \(n\) denote the linear dimension of the vector space \(X\). There exist
an invariant metric \(p\) on \(\RRR^n\) (compatible with the topology of that
space) and an isometry \(Q\dd (X,d) \to (\RRR^n,p)\) such that:
\begin{equation}\label{eqn:sphere}
p(x,y) = p(x',y') \iff d_e(x,x') = d_e(y,y') \qquad (x,x',y,y' \in \RRR^n). 
\end{equation}
\end{lem}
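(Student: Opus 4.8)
The goal is to realise $(X,d)$ — which by Tits' theorem is a finite‑dimensional real vector space on which translations are dilations and the isotropy group $L\subseteq GL(X)$ acts transitively on $X\setminus\{\theta\}$ — as $(\RRR^n,p)$ for an invariant metric $p$ whose level sets of "$p(x,y)=p(x',y')$" coincide with those of the Euclidean metric. The key structural input is \LEM{invariant}: $d$ is already a translation‑invariant metric on the group $X$, so writing $f(x)\df d(\theta,x)$ we have $d(x,y)=f(y-x)$ and the whole problem reduces to understanding the single function $f\dd X\to[0,\infty)$. By invariance, \eqref{eqn:sphere} is equivalent to the assertion that, after a linear identification $Q\dd X\to\RRR^n$, the sublevel sets (equivalently the "spheres") of $f$ are exactly the Euclidean spheres centred at $0$; once we have such a $Q$, we simply transport $d$ and set $p\df Q_*d$, which is automatically invariant and compatible with the topology (the topology of $(X,d)$ is the vector space topology, by \THM{tits}).

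**The main step: the spheres of $f$ are ellipsoids.** Fix $c\in\Gamma(X)$ and consider $S\df f^{-1}(c)=\{x\dd d(\theta,x)=c\}$, a closed, bounded (by Heine–Borelness, \LEM{H-B}) subset of $X\setminus\{\theta\}$; by \LEM{H-B} applied to a type‑2 space, $f$ takes \emph{every} positive value, and by invariance $f$ is continuous, so $S$ is a compact topological sphere separating $\theta$ from infinity. The crucial point is that $L$ acts transitively on $X\setminus\{\theta\}$ \emph{and} each $A\in L$ rescales $f$ by the factor $|A|$ (since $A$ is a dilation fixing $\theta$): $f(Ax)=|A|\,f(x)$. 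Hence $L$ permutes the family $\{f^{-1}(t)\dd t>0\}$ of spheres, acting transitively on each after rescaling, and in particular the subgroup $L_1\df\{A\in L\dd |A|=1\}=\ker(|\cdot|\restriction L)$ is a compact (it preserves the compact set $S$ and acts on a finite‑dimensional space — compactness follows as in \LEM{loc-comp}, or from \LEM{Lamb-non-tot-disc}) subgroup of $GL(X)$ acting \emph{transitively} on the sphere $S$. A compact subgroup of $GL(n,\RRR)$ acting transitively on a topological $(n-1)$‑sphere is, after conjugation, a subgroup of $O(n)$ acting transitively on the round sphere (average any inner product over the Haar measure of the compact group to make it orthogonal); the orbit $S$ of that action is then a genuine Euclidean sphere. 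This is where I expect the real work to be — justifying compactness of $L_1$ and invoking (or reproving) the linearisation of a compact transitive linear action; everything else is bookkeeping. Choosing $Q$ to be the linear map diagonalising the averaged inner product to the standard one gives a linear isomorphism $Q\dd X\to\RRR^n$ sending $S$ to a Euclidean sphere centred at $0$.

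**Matching all the spheres and concluding.** It remains to see that $Q$ simultaneously sends \emph{every} sphere $f^{-1}(t)$ to a Euclidean sphere centred at the origin, not just $S=f^{-1}(c)$. For this, pick for each $t>0$ some $A_t\in L$ with $|A_t|=t/c$; then $A_t$ maps $S$ onto $f^{-1}(t)$, and $A_t$ commutes with the $L_1$‑action up to the isomorphism $x\mapsto A_tx$, so $f^{-1}(t)$ is again an $L_1$‑orbit (under the conjugated action), hence again a round sphere after applying $Q$ — here one uses that a compact group's transitive linear action on $\RRR^n\setminus\{0\}$‑orbits has all orbits concentric round spheres once one of them is. Thus in the coordinates given by $Q$, $f$ is a function of $\|\cdot\|_2$ alone: $f(x)=g(\|x\|_2)$ for a strictly increasing continuous $g\dd[0,\infty)\to[0,\infty)$ with $g(0)=0$. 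Consequently $Q_*d(x,y)=g(\|x-y\|_2)=:p(x,y)$ is an invariant metric on $\RRR^n$ inducing the standard topology, and for $x,x',y,y'\in\RRR^n$ we have $p(x,x')=p(y,y')\iff g(\|x-x'\|_2)=g(\|y-y'\|_2)\iff\|x-x'\|_2=\|y-y'\|_2=d_e(x,x')=d_e(y,y')$, which is exactly \eqref{eqn:sphere}. This completes the construction of $p$ and $Q$.
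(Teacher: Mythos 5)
Your proposal follows essentially the same route as the paper: the norm\nobreakdash-one stabiliser (\(L_1\), the paper's \(K\)) is compact, Haar averaging (Weyl's unitarian trick) produces an invariant inner product and hence a linear \(Q\), transitivity of this group on the \(d\)-spheres around \(\theta\) gives the forward implication of \eqref{eqn:sphere}, and a separation argument upgrades containment of each \(p\)-sphere in a round sphere to equality. The one soft spot is that you derive ``orbit \(=\) full round sphere'' from the assertion that \(S\) is a topological \((n-1)\)-sphere, which you never prove; what you actually have, and what suffices, is that \(S\) is a compact set separating \(\theta\) from infinity and contained in a round sphere, so it must be the whole round sphere --- which is exactly the paper's disconnection argument, with the case \(n=1\) handled separately via the symmetry \(S=-S\).
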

\begin{proof}
Let \(K\) consist of all \(u \in \Lambda(X)\) such that \(u(\theta) = \theta\)
and \(|u| = 1\). Then \(K\) is a compact subgroup of \(L\) (as \(K\) is a closed
subgroup of \(L\) contained in \(U(\theta,a,2\rho(\theta,a))\) where \(a \in X\)
is different from \(\theta\)---consult \LEM{loc-comp}). Since \(L\) is
a subgroup of \(GL(X)\), it follows from Wyel's unitarian trick that there is
a scalar product \(\scalarr\) on \(X\) such that each operator \(A\) from \(K\)
satisifes
\begin{equation}\label{eqn:aux2}
\scalar{Ax}{Ay} = \scalar{x}{y} \qquad (x, y \in X).
\end{equation}
Now let \(Q\dd X \to \RRR^n\) be a linear isomorphism such that \(\scalar{x}{x}
= \|Qx\|_2^2\) for all \(x \in X\). We define a compatible metric \(p\) on
\(\RRR^n\) by \(p(x,y) \df d(Q^{-1}(x),Q^{-1}(y))\). Since \(Q\) is an isometry
from \((X,d)\) onto \((\RRR^n,p)\) that is linear, we infer that \(p\) is
invariant (thanks to \LEM{invariant}). Moreover, if \(p(x',y') = p(x,y)\), then
\(d(Q^{-1}(x')-Q^{-1}(y'),\theta) = d(Q^{-1}(x)-Q^{-1}(y),\theta)\) and it
follows from two-point dilation-ho\-mogeneity of \(X\) that there is \(A \in K\)
that sends \(Q^{-1}(x)-Q^{-1}(y)\) to \(Q^{-1}(x')-Q^{-1}(y')\). We conclude
from \eqref{eqn:aux2} that \(Q A Q^{-1}\) is isometric w.r.t. \(d_e\) and hence
\(d_e(x'-y',0) = d_e((QAQ^{-1})x-(QAQ^{-1})y,0) = d_e(x-y,0)\) or, equivalently,
\(d_e(x',y') = d_e(x,y)\). This shows the direct implication of
\eqref{eqn:sphere}. To prove the reverse one, one uses a general property of
connected metric spaces that each sphere in such a space disconnects this space.
More precisely, if \(n > 1\), then the \(p\)-sphere \(\{x \in \RRR^n\dd\ p(0,x)
= r\}\) disconnects \(\RRR^n\) and is contained in the standard sphere \(\{x \in
\RRR^n\dd\ \|x\|_2 = r\}\), which implies that both these spheres coincide. And
if \(n = 1\), then the \(p\)-sphere \(S\) around \(0\) is non-empty and
symmetric (that is, \(S = -S\)), and contained in the standard two-point sphere,
which again implies that these two spheres coincide. So, the reverse implication
of \eqref{eqn:sphere} follows from the invariance of both the metrics involved
therein (and the identities of the respective spheres around \(0\)).
\end{proof}

\begin{lem}{monotone}
Let \(\omega\dd [0,\infty) \to [0,\infty)\) be a one-to-one function.
\begin{enumerate}[\upshape(A)]
\item If \(\omega \circ d_e\) is a metric, then \(\omega(x+y) \leq \omega(x)+
 \omega(y)\) for all \(x, y \geq 0\).
\item If \(\omega\) is continuous and surjective, and both \(\omega \circ d_e\)
 and \(\omega^{-1} \circ d_e\) are metrics, then \(\omega(x) = c x\) for all \(x
 \geq 0\) and some constant \(c > 0\).
\end{enumerate}
\end{lem}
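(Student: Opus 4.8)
The plan is to deduce everything from the triangle inequality applied to three collinear points, using that (in part (B)) the inverse of \(\omega\) is again a function of the same kind. For part (A), I would first note that the metric axioms for \(\omega\circ d_e\) force \(\omega(0)=0\) (evaluate it on a pair of equal points) and \(\omega(t)>0\) for \(t>0\) (positivity of a metric, or just injectivity together with \(\omega(0)=0\) and \(\omega\geq 0\)). Then, given \(x,y\geq 0\), I would apply the triangle inequality of \(\omega\circ d_e\) to three collinear points lying at mutual Euclidean distances \(x\), \(y\) and \(x+y\); it reads precisely \(\omega(x+y)\leq\omega(x)+\omega(y)\), which is the claim (for \(x=0\) or \(y=0\) the inequality is trivial since \(\omega(0)=0\)).

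For part (B), the first task is to pin down the shape of \(\omega\): a continuous injection of \([0,\infty)\) into itself is strictly monotone, and since \(\omega(0)=0\) and \(\omega\) takes values in \([0,\infty)\), it must be strictly increasing; being also surjective, \(\omega\) is an order isomorphism of \([0,\infty)\) onto itself, so its functional inverse \(\omega^{-1}\) is another strictly increasing continuous surjection of \([0,\infty)\) with \(\omega^{-1}(0)=0\). I would then invoke part (A) twice: once for \(\omega\) (since \(\omega\circ d_e\) is a metric), giving subadditivity \(\omega(x+y)\leq\omega(x)+\omega(y)\); and once for \(\omega^{-1}\) (since \(\omega^{-1}\circ d_e\) is a metric), giving \(\omega^{-1}(x+y)\leq\omega^{-1}(x)+\omega^{-1}(y)\) for all \(x,y\geq 0\).

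The crux of the argument, and the step I would flag as the one to get right, is to convert the subadditivity of \(\omega^{-1}\) into superadditivity of \(\omega\): applying the increasing map \(\omega\) to both sides of \(\omega^{-1}(x+y)\leq\omega^{-1}(x)+\omega^{-1}(y)\) yields \(x+y\leq\omega(\omega^{-1}(x)+\omega^{-1}(y))\), and substituting \(u=\omega^{-1}(x)\geq 0\), \(v=\omega^{-1}(y)\geq 0\) (which range over all of \([0,\infty)\) as \(x,y\) do, precisely because \(\omega^{-1}\) is a genuine everywhere-defined bijection) gives \(\omega(u)+\omega(v)\leq\omega(u+v)\) for all \(u,v\geq 0\). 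Combined with subadditivity, this forces \(\omega(u+v)=\omega(u)+\omega(v)\) on \([0,\infty)\); since \(\omega\) is continuous (indeed monotone), the only solutions of this Cauchy-type equation are \(\omega(t)=ct\), and \(c=\omega(1)>0\) because \(\omega\) is strictly increasing with \(\omega(0)=0\). Beyond this there is no real obstacle: the only points needing care are that all substitutions stay inside \([0,\infty)\) and that \(\omega^{-1}\) is a genuine two-sided inverse defined on the whole half-line, which is exactly what the continuity and surjectivity hypotheses secure.
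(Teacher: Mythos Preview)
Your proposal is correct and follows essentially the same route as the paper: in (A) the triangle inequality is applied to three collinear points at Euclidean distances $x$, $y$, $x+y$, and in (B) part (A) is applied to $\omega^{-1}$, the resulting subadditivity is turned into superadditivity of $\omega$ via the substitution $u=\omega^{-1}(x)$, $v=\omega^{-1}(y)$, and the Cauchy equation $\omega(u+v)=\omega(u)+\omega(v)$ is solved by continuity. You are merely more explicit than the paper about why $\omega$ is strictly increasing and why the ``equivalently'' step in passing from $\omega^{-1}(a+b)\le\omega^{-1}(a)+\omega^{-1}(b)$ to $\omega(x)+\omega(y)\le\omega(x+y)$ is justified.
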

\begin{proof}
Assume \(\omega \circ d_e\) is a metric and fix \(x, y \in [0,\infty)\). Take
any vector \(v \in \RRR^n\) such that \(d_e(v,0) = 1\). Then \(\omega(x+y) =
\omega(d_e((x+y)v,0)) \leq \omega(d_e((x+y)v,yv)) + \omega(d_e(yv,0)) =
\omega(x)+\omega(y)\).\par
Now if both \(\omega \circ d_e\) and \(\omega^{-1} \circ d_e\) are metrics, then
for \(a \df \omega(x)\) and \(b \df \omega(y)\) we have (from (A))
\(\omega^{-1}(a+b) \leq \omega^{-1}(a)+\omega^{-1}(b)\) or, equivalently,
\(\omega(x)+\omega(y) \leq \omega(x+y)\). So, \(\omega(x+y) = \omega(x)+
\omega(y)\) for any \(x, y \geq 0\) and the conclusion of (B) follows from
continuity and monotonicity of \(\omega\).
\end{proof}

Finally, we are able to give a proof of the following

\begin{thm}{type-2}
If \((X,d)\) is a two-point dilation-homogeneous metric space that contains
a compact set with non-empty interior and the set \(d(X \times X)\) is dense in
\([0,\infty)\), then there exists a unique integer \(n > 0\) and real \(\alpha
\in (0,1]\) such that \((X,d)\) is isometric to \((\RRR^n,d_e^{\alpha})\).
\end{thm}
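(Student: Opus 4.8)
The plan is to combine Lemma~\ref{lem:R-n-p} with Lemma~\ref{lem:monotone} so as to reduce the invariant metric $p$ produced on $\RRR^n$ to a power of the Euclidean metric, and then to pin down the admissible exponents via scaling. First I would invoke \LEM{R-n-p} to replace $(X,d)$ by an isometric copy $(\RRR^n,p)$, where $p$ is an invariant metric compatible with the usual topology and satisfying the ``same-distance'' equivalence \eqref{eqn:sphere}. The crucial consequence of \eqref{eqn:sphere} is that $p(x,y)$ depends only on $d_e(x,y)$: indeed, fixing a reference pair at Euclidean distance $t$, all pairs at Euclidean distance $t$ have one common $p$-value, so there is a well-defined function $\omega\dd[0,\infty)\to[0,\infty)$ with $p=\omega\circ d_e$, and $\omega$ is one-to-one (again by \eqref{eqn:sphere}, reading the equivalence in the other direction). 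Since $p$ is a metric compatible with the topology, $\omega$ is continuous at $0$ with $\omega(0)=0$, $\omega(t)>0$ for $t>0$.

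Next I would upgrade $\omega$ to a continuous increasing bijection of $[0,\infty)$. Continuity and injectivity of $\omega$ on the connected domain $[0,\infty)$ force strict monotonicity, hence (as $\omega(0)=0$, $\omega>0$ elsewhere) strict increase; surjectivity onto $[0,\infty)$ follows because $p$ is unbounded (the space $\RRR^n$ with a compatible invariant metric has infinite diameter along any ray, using that $p((k+1)v,kv)=\omega(d_e(v,0))$ adds up along the segment from $0$ to $mv$) together with continuity and the intermediate value theorem. Now the dilation-homogeneity enters decisively: for each scalar $\lambda>0$ the map $x\mapsto\lambda x$ — being a Euclidean homothety — is, by two-point dilation-homogeneity combined with \LEM{homo-R} and \LEM{group} (which give that $\{|u|\}$ is all of $(0,\infty)$ in the type 2 case, cf.\ \LEM{H-B}), realized up to a $p$-isometry by an element of $\Lambda(X)$; more directly, since $\Gamma(X)=(0,\infty)$, for any $t>0$ and any target $s>0$ there is a dilation carrying a pair at $p$-distance $\omega(t)$ to a pair at $p$-distance $s$, and tracking Euclidean distances through \eqref{eqn:sphere} one gets that $\omega(\lambda t)=g(\lambda)\,\omega(t)$ for a multiplicative $g\dd(0,\infty)\to(0,\infty)$. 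This says $\omega$ is ``multiplicatively homogeneous,'' and continuity then yields $\omega(t)=c\,t^{\alpha}$ for some $c>0$, $\alpha>0$. Rescaling (absorbing $c$) we may take $\omega(t)=t^{\alpha}$, i.e.\ $p=d_e^{\alpha}$.

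It remains to see $\alpha\le1$, and this is where \LEM{monotone} does the work: by part (A), since $\omega\circ d_e=d_e^{\alpha}$ is a metric, $\omega$ is subadditive, i.e.\ $(x+y)^{\alpha}\le x^{\alpha}+y^{\alpha}$ for all $x,y\ge0$, which forces $\alpha\le1$. (Conversely every $\alpha\in(0,1]$ does occur, as $R_{n,\alpha}$ shows, but that is the existence half already absorbed in \THM{main}.) Uniqueness of the pair $(n,\alpha)$ is immediate: $n$ is the topological dimension of the space, and $\alpha$ is recovered from the metric as the exponent in the (up to scale unique) relation $p=d_e^{\alpha}$ — e.g.\ two different exponents give non-bi-Lipschitz-equivalent metrics near $0$, or one simply reads off $\alpha$ from the growth rate $p(0,tv)=t^{\alpha}p(0,v)$.

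I expect the main obstacle to be establishing cleanly that $\omega$ is \emph{multiplicatively homogeneous}, i.e.\ extracting the functional equation $\omega(\lambda t)=g(\lambda)\omega(t)$ from dilation-homogeneity. The subtlety is that a dilation realizing a prescribed ratio of $p$-distances is only guaranteed to move \emph{one} chosen pair correctly, so one must use \eqref{eqn:sphere} to propagate the scaling to all pairs and check that the induced map on Euclidean distances is genuinely a homothety $x\mapsto\lambda x$ (not merely some distance-preserving-after-$\omega$ bijection). Once the functional equation is in hand, everything else — strict monotonicity, the power-law solution, and the bound $\alpha\le1$ via \LEM{monotone}(A) — is routine.
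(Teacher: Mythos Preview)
Your outline matches the paper's: reduce via \LEM{R-n-p} to $(\RRR^n,p)$ with $p=\omega\circ d_e$, extract a power law for $\omega$, and bound the exponent with \LEM{monotone}(A). The substantive gap is exactly the step you yourself flag as the main obstacle, and you underuse the tool already at hand: part~(B) of \LEM{monotone} is precisely what linearises the induced map on Euclidean distances. Concretely, fix $r>0$ and any $u_r\in\Lambda(\RRR^n,p)$ with $|u_r|=r$; from $\omega(d_e(u_r x,u_r y))=r\,\omega(d_e(x,y))$ one reads off that $\tau\df\omega^{-1}\bigl(r\,\omega(\cdot)\bigr)$ satisfies $(\tau\circ d_e)(x,y)=d_e(u_r x,u_r y)$, so $\tau\circ d_e$ is a genuine metric; applying the same reasoning to $u_{1/r}$ shows $\tau^{-1}\circ d_e$ is a metric as well, and then \LEM{monotone}(B) forces $\tau(x)=\phi(r)\,x$. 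This is the functional equation $r\,\omega(x)=\omega(\phi(r)x)$ you were after, obtained without ever having to argue that a $p$-dilation is literally a Euclidean homothety. From here $\phi(r)=\omega^{-1}(r\,\omega(1))$ is a continuous multiplicative endomorphism of $(0,\infty)$, hence $\phi(r)=r^{\beta}$, and substituting $x=1$, $r=t^{1/\beta}$ gives $\omega(t)=(ct)^{\alpha}$ with $\alpha=1/\beta$.

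Two smaller points. First, your passage from ``$\omega$ continuous at $0$'' to ``$\omega$ continuous'' is circular as written (you invoke continuity to deduce monotonicity before continuity is available). The paper closes this by applying \LEM{monotone}(A) first: subadditivity of $\omega$ together with continuity at $0$ yields continuity everywhere, and then injectivity gives strict increase. Second, your unboundedness argument via summing $p\bigl((k{+}1)v,kv\bigr)$ along a ray goes the wrong way---the triangle inequality only bounds $p(0,mv)$ \emph{above} by that sum, not below. Simply quote \LEM{H-B}: in the type~2 case $\Gamma(X)=(0,\infty)$, so $\omega$ is surjective onto $(0,\infty)$ immediately.
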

\begin{proof}
We have already shown that if \(X\) has all the postulated properties, then it
is Heine-Borel and type 2, and it follows from \LEM{R-n-p} that \((X,d)\) is
isometric to \((\RRR^n,p)\) for some \(n > 0\) and a compatible invariant metric
\(p\) on \(\RRR^n\) that satisfies \eqref{eqn:sphere}. In particular, for any
\(r > 0\) there is a dilation \(u_r \in \Lambda(\RRR^n,p)\) with \(|u_r| =
r\).\par
It follows from \eqref{eqn:sphere} that there exists a one-to-one function
\(\omega\dd [0,\infty) \to [0,\infty)\) such that \(p = \omega \circ d_e\).
Since \(p\) is compatible, we infer that \(\omega\) is continuous at \(0\). Now
item (A) of \LEM{monotone} yields that \(\omega\) is a continuous function. In
particular, it is strictly increasing. And \LEM{H-B} implies that
\(\omega([0,\infty)) = [0,\infty)\).\par
Now fix arbitrary \(r > 0\) and set \(\tau\dd [0,\infty) \ni x \mapsto
\omega^{-1}(r \omega(x)) \in [0,\infty)\). Observe that \(\tau\) is a strictly
increasing (and continuous) bijection on \([0,\infty)\), and for all \(x, y \in
\RRR^n\):
\begin{itemize}
\item \((\tau \circ d_e)(x,y) = d_e(u_r(x),u_r(y))\); and
\item \((\tau^{-1} \circ d_e)(x,y) = d_e(u_{r^{-1}}x,u_{r^{-1}}y)\).
\end{itemize}
In particular, both \(\tau \circ d_e\) and \(\tau^{-1} \circ d_e\) are metrics.
So, \LEM{monotone} yields that \(\tau(x) = \phi(r) x\) for some constant
\(\phi(r) > 0\). In this way we have obtained a function \(\phi\dd (0,\infty)
\to (0,\infty)\) such that
\begin{equation}\label{eqn:aux3}
r \omega(x) = \omega(\phi(r)x) \qquad (r > 0,\ x \geq 0).
\end{equation}
Since \(\phi(r) = \omega^{-1}(r \omega(1))\), we see that \(\phi\) is continuous
and strictly increasing. Moreover, it easily follows from \eqref{eqn:aux3} that
\(\phi\) is an endomorphism of the multiplicative group of all positive reals.
So, we conclude that \(\phi(r) = r^{\beta}\) for some real constant \(\beta >
0\). Setting \(\alpha \df \frac{1}{\beta}\) and \(c \df \omega(1)^{\beta}\), and
substituting \(x = 1\) and \(r = t^{\alpha}\) in \eqref{eqn:aux3}, we obtain
\(\omega(t) = (ct)^{\alpha}\). Finally, since \(\omega\) satisfies condition (A)
of \LEM{monotone}, we get \(\alpha \leq 1\). So, \(p = (c d_e)^{\alpha}\). Since
\((\RRR^n,d_e)\) is isometric to \((\RRR^n,c d_e)\), we infer that \((X,d)\) is
isometric to \((\RRR^n,d_e^{\alpha})\). Uniqueness of \(n\) and \(\alpha\) are
left to the reader (cf. \COR{get-param} below).
\end{proof}

Below we keep the notation of \THM{main}. The proof of the next result is left
to the reader.

\begin{cor}{get-param}
Let \((X,d)\) be a two-point dilation-homogeneous metric space that has more
than one point and contains a compact set with non-empty interior. Further, let
\(r > 0\) be an arbitrary distance attainable in \(X\) and let \(A\) be
a maximal subset of \(X\) whose all points are at a distance of \(r\).
\begin{enumerate}[\upshape(A)]
\item If \(d(X \times X) = \{0,r\}\), then \((X,d)\) is isometric to
 \(D_{\alpha,r}\) where \(\alpha\) is the cardinality of \(A\).
\item If \(d(X \times X)\) contains more than two numbers and is not dense in
 \([0,\infty)\), then let \(n\), \(a\), and \(b'\) denote, respectively,
 the size of \(A\), the least distance not less than \(1\) attainable in \(X\)
 and the least distance attainable in \(X\) that is greater than \(a\). Then
 \(n\) is finite, greater than \(1\) and \((X,d)\) is isometric to \(F_{n,a,b}\)
 with \(b = \frac{b'}{a}\).
\item If \(d(X \times X)\) is dense in \([0,\infty)\), then \((X,d)\) is
 isometric to \(R_{n-1,\alpha}\) where \(n > 1\) is the (finite) size of \(A\)
 and \(\alpha \df \log_2\frac{\diam \bar{B}(a,r)}{r}\ (\in (0,1])\) where \(a
 \in X\) is chosen arbitrarily and \(\diam B\) stands for the diameter of
 a subset \(B\) of \(X\).
\end{enumerate}
\end{cor}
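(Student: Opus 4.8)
The hypotheses of (A), (B) and (C) say precisely that \((X,d)\) is of type \(0\), \(1\) and \(2\), respectively, in the sense of Definition~\ref{def:type}, so the plan is to quote the corresponding classification already established---Proposition~\ref{pro:type-0}, \THM{type-1} and \THM{type-2}---and then to express the parameters of the model space through the intrinsic data \(r\), \(A\), \(a\), \(b'\). In case (A), \(\Gamma(X)=\{r\}\), so \(X\) is degenerate and \(d=r\delta\) by Proposition~\ref{pro:type-0}; every two distinct points then lie at distance \(r\), so an inclusion-maximal subset all of whose points are pairwise at distance \(r\) is \(X\) itself, whence \(\alpha=|A|=|X|>1\) and \((X,d)=D_{\alpha,r}\).

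In case (B), \(\Gamma(X)=d(X\times X)\setminus\{0\}\) has at least two elements and is not dense in \((0,\infty)\), so \(X\) is of type \(1\), and \LEM{group} gives \(\Gamma(X)=\{a(X)b(X)^{k}\dd\ k\in\ZZZ\}\) with \(1\le a(X)<b(X)\). From \(a(X)/b(X)<1\) and \(a(X)\ge 1\) one reads off that the least attainable distance not less than \(1\) is \(a(X)\), and the least attainable distance strictly greater than \(a(X)\) is \(a(X)b(X)\); hence \(a=a(X)\) and \(b'/a=b(X)\). It remains to identify the number \(n\)---the size of an inclusion-maximal set at pairwise distance \(r\), for the arbitrary attainable \(r\)---with \(N(X)\). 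Writing \(r=a(X)b(X)^{j}\) and taking a dilation \(\phi_{j}\in\Lambda(X)\) with \(|\phi_{j}|=b(X)^{j}\) (available by \LEM{group}), we see that \(\phi_{j}\) carries sets at pairwise distance \(a(X)\) bijectively onto sets at pairwise distance \(r\), and \(\phi_{j}^{-1}\) does the reverse, so inclusion-maximal ones correspond; moreover any two inclusion-maximal sets at pairwise distance \(a(X)\) have the same cardinality, necessarily finite by \LEM{H-B}, since by \COR{ultra} an isometry of a smaller one onto a proper subset of a larger one would extend to a global isometry and contradict maximality. That common cardinality is \(N(X)\), so \THM{type-1}(B) identifies \((X,d)\) with \(F_{N(X),a(X),b(X)}=F_{n,a,b'/a}\), in which \(n=N(X)\in\NNN\setminus\{0,1\}\) is finite and exceeds \(1\).

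In case (C), \(\Gamma(X)\) is dense in \((0,\infty)\), so \(X\) is of type \(2\), and \THM{type-2} furnishes an isometry of \((X,d)\) onto \((\RRR^{m},d_{e}^{\alpha})\) for a unique integer \(m>0\) and a unique real \(\alpha\in(0,1]\); it remains to recover \(m\) and \(\alpha\) intrinsically. Transporting to this model, a subset is at pairwise \(d_{e}^{\alpha}\)-distance \(r\) iff it is at pairwise Euclidean distance \(r^{1/\alpha}\), and I would invoke the classical fact that an inclusion-maximal equilateral subset of \(\RRR^{m}\) (one with all pairwise distances equal) has exactly \(m+1\) elements---indeed \(k\) equidistant points with \(k\le m\) span a regular \((k-1)\)-simplex whose circumradius is strictly smaller than its edge, so a further equidistant point can be adjoined along any perpendicular direction, and the process stops precisely at \(k=m+1\). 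Hence \(n=m+1\), i.e.\ \(m=n-1\ge 1\). As for \(\alpha\): the ball \(\bar{B}(a,r)\) in \((\RRR^{m},d_{e}^{\alpha})\) is the Euclidean ball of radius \(r^{1/\alpha}\), whose \(d_{e}^{\alpha}\)-diameter equals \((2r^{1/\alpha})^{\alpha}=2^{\alpha}r\) (here \(m\ge 1\)) and is independent of the centre by metric homogeneity (\LEM{two-point}); therefore \(\diam\bar{B}(a,r)/r=2^{\alpha}\) and \(\alpha=\log_{2}(\diam\bar{B}(a,r)/r)\in(0,1]\), so \((X,d)\cong R_{n-1,\alpha}\). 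Since these formulas express \(n\) and \(\alpha\) through isometry invariants, they simultaneously deliver the uniqueness of \(n\) and \(\alpha\) deferred in \THM{type-2}.

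The one step above that is more than bookkeeping is the Euclidean geometry in case (C)---that inclusion-maximal equilateral subsets of \(\RRR^{m}\) all have the same size \(m+1\), which is what makes the intrinsically defined \(n\) well posed---together with the corresponding well-posedness in case (B), settled via \COR{ultra} as indicated. The remaining verifications are a direct unwinding of the three classification theorems and of Definition~\ref{def:type}, and I would leave them to the reader.
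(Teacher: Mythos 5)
Your proposal is correct, and it follows exactly the route the paper intends (the paper leaves this proof to the reader): read off the type from the hypothesis on \(d(X\times X)\), invoke Proposition~\ref{pro:type-0}, \THM{type-1} and \THM{type-2} respectively, and then express the classification parameters through \(r\), \(A\), \(a\), \(b'\) and \(\diam\bar{B}(a,r)\). The two points you rightly single out as needing an argument are also handled soundly: the well-definedness of \(n\) in case (B) via ultrahomogeneity and the Heine--Borel property, and in case (C) the classical fact that every inclusion-maximal equilateral subset of \(\RRR^{m}\) has exactly \(m+1\) points, together with the computation \(\diam\bar{B}(a,r)=2^{\alpha}r\).
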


An attentive reader will realise that Theorems~\ref{thm:main} and \ref{thm:R-n}
are direct consequences of the above result. For the reader's convenience, below
we give a brief

\begin{proof}[Proof of \COR{abs-homo}]
Assume \(X\) has all the properties postulated in the result. If \(X\) is of
type 2, then the conclusion easily follows from an analogous property of
the Euclidean spaces. And if \(X\) is of type 0, the assertion is clear (as then
\(X\) is finite). So, we only need to discuss the case when \(X\) is of type 1.
To this end, consider an arbitrary locally compact two-point
dilation-homogeneous metric space \((X,d)\) and a dilation \(\psi\dd A \to B\)
between two arbitrary subsets of \(X\). If \(A\) has at most one element, then
\(\psi\) extends to an isometry, by \LEM{two-point}. So, below we assume \(A\)
has at least two points, say \(a, b \in A\) with \(a \neq b\). Then there exists
\(\xi \in \Lambda(X)\) that sends \(a\) to \(\psi(a)\) and \(b\) to \(\psi(b)\).
In particular, \(\phi\dd A \ni x \mapsto \xi^{-1}(\psi(x)) \in X\) is isometric,
and it suffices to show that \(\phi\) extends to an isometry \(\Phi\dd X \to X\)
(because then \(\Psi \df \xi \circ \Phi\) is a dilation that extends
\(\psi\)).\par
We conclude from \COR{ultra} that \(X\) is ultrahomogenous. Observe that each
function \(u\dd X \to X\) isometric on a set \(F \subset X\) containing \(a\)
and satisfying \(u(a) = \phi(a)\), for each \(x \in F\) satisfies \(u(x) \in K_x
\df \bar{B}(\phi(a),d(a,x))\) (\(K_a \df \{\phi(a)\})\); and note that \(K_x\)
is a compact set for each \(x \in X\). Now denote by \(I\) the collection of all
finite subsets of \(X\) that contain \(a\) and for any \(F \in I\) denote by
\(\LlL(F)\) the collection of all functions \(u\dd X \to X\) such that:
\begin{itemize}
\item \(u(x) = \phi(x)\) for any \(x \in F \cap A\) (in particular, \(u(a) =
 \phi(a)\)); and
\item \(u\) is isometric on \(F\); and
\item \(u(x) \in K_x\) for any \(x \in X\).
\end{itemize}
Since \(X\) is ultrahomogeneous, \(\LlL(F) \neq \varempty\) for each \(F \in
I\). Moreover, it is readily seen that \(\LlL(F_1) \cap \LlL(F_2) \supset
\LlL(F_1 \cup F_2)\) for any \(F_1, F_2 \in I\). So, the family
\(\{\LlL(F)\}_{F \in I}\) is centered. On the other hand, each of \(\LlL(F)\)
may naturally be considered as a subspace of \(Z \df \prod_{x \in X} K_x\) and
as such, it is a closed subspace of \(Z\). So, the Tychonoff theorem implies
that \(\bigcap_{F \in I} \LlL(F)\) is non-empty. An observation that each
function \(\Phi\) from this intersection is an isometry that extends \(\phi\)
finishes the proof. (\(\Phi\) needs to be surjective, because otherwise we may
apply the whole argument to \(\phi = \Phi^{-1}\) to get a contradiction.)
\end{proof}

We end the paper with the following

\begin{exm}{prod}
As we have seen, locally compact two-point dilation-homogeneous metric spaces
are uniquely determined by two sets of characteristic parameters, one of which
is responsible for the `size' of the underlying space, whereas the other for
the `shape' of the metric:
\begin{itemize}
\item For type 0 spaces \(D_{\alpha,r}\): size parameter is \(\alpha\);
 metric parameter is \(r\).
\item For type 1 spaces \(F_{n,a,b}\): size parameter is \(n\); metric
 parameters are \(a\) and \(b\).
\item For type 2 spaces \(R_{n,\alpha}\): size parameter is \(n\); metric
 one is \(\alpha\).
\end{itemize}
It is a strange phenomenon that when dealing with spaces having the same type
and a common set of metric parameters, we may produce their products in a quite
similar manner, but the size parameter changes differently:
\begin{itemize}
\item For a (non-empty) collection \(\{(X_s,d_s)\}_{s \in S}\) (finite if
 dealing with type 1) of spaces of common type different than 2 and with common
 metric parameters, its product \(Y \df \prod_{s \in S} X_s\) is equipped with
 the `sup' metric (that is, \(d_Y((x_s)_{s \in S},(y_s)_{s \in S}) =
 \sup_{s \in S} d_s(x_s,y_s)\)); and the size parameter of \(Y\) coincides with
 the product of size parameters of all spaces \(X_s\).
\item For a finite collection \((X_1,d_1),\ldots,(X_N,d_N)\) of type 2 spaces
 with common metric parameter \(\alpha\), their product \(Y = X_1 \times \ldots
 \times X_N\) is equipped with the \(\ell_p\)-metric where \(p =
 \frac{1}{\alpha}\) (that is, \(d_Y((x_1,\ldots,x_N),(y_1,\ldots,y_N)) =
 (\sum_{k=1}^N d_k(x_k,y_k)^p)^{\alpha}\)); and the size parameter of \(Y\)
 coincides with the sum of size parameters of all spaces \(X_k\), reduced by
 \(N-1\) (that is, if \(s(X)\) stands for the size parameter of a space \(X\),
 then \(s(X_1 \times \ldots \times X_N) = (\sum_{k=1}^N s(X_k)) + 1 - N\)).
\end{itemize}
We leave the details to interested readers.
\end{exm}


\begin{thebibliography}{28}

\bibitem{v-d} D.\ van Dantzig,
 \textit{Zur topologischen Algebra. III. Brouwersche und Cantorsche Gruppen},
 Compos.\ Math.\ \textbf{3} (1936), 408--426.

\bibitem{g-k} S.\ Gao and A.S.\ Kechris,
 \textit{On the classification of Polish metric spaces up to isometry},
 Mem.\ Amer.\ Math.\ Soc.\ \textbf{161} (2003), viii+78.

\bibitem{kra} L.\ Kramer,
 \textit{Two-transitive Lie groups},
 J.\ Reine Angew.\ Math.\ \textbf{563} (2003), 83--113.

\bibitem{m-s} M.\ Malicki and S.\ Solecki,
 \textit{Isometry groups of separable metric spaces},
 Math.\ Proc.\ Cambridge Phil.\ Soc.\ \textbf{146} (2009), 67--81.

\bibitem{pn1} P.~Niemiec,
 \textit{Isometry groups among topological groups},
 Pacific J.\ Math.\ \textbf{266} (2013), 77--116.

\bibitem{pn2} P.~Niemiec,
 \textit{Isometry groups of proper metric spaces},
 Trans.\ Amer.\ Math.\ Soc.\ \textbf{366} (2014), 2597--2623.

\bibitem{tit} J.\ Tits,
 \textit{Sur certaines classes d'espaces homog\`{e}nes de groupes de Lie},
 Mem.\ Acad.\ Royale Belgique, Classe des Sciences XXIX, Fasc.\ \textbf{3} (1955), 270 pp. (French).

\bibitem{wan} H.-C.\ Wang,
 \textit{Two-point homogeneous spaces},
 Ann.\ Math.\ \textbf{55} (1952), 177--191.

\end{thebibliography}
\end{document}